\title{Pathological quotient singularities in characteristic three which are not log canonical}
\author{Takahiro Yamamoto\thanks{
\noindent address: Department of Mathematics, Graduate School of Science, Osaka University (Toyonaka, Osaka 560-0043, JAPAN)\newline
e-mail: t-yamamoto@cr.math.sci.osaka-u.ac.jp
}}
\date{}
\renewcommand{\thefootnote}{*\arabic{footnote}}
\theoremstyle{definition}
\newtheorem{lem}{Lemma}[section]
\newtheorem{prop}[lem]{Proposition}
\newtheorem{thm}[lem]{Theorem}
\newtheorem{coro}[lem]{Corollary}
\def\SL{\mathrm{SL}(3,K)}
\def\GL{\mathrm{GL}}
\def\Spec{\mathrm{Spec}}
\def\Sing{\mathrm{Sing}}
\def\F{\mathbb F}
\def\A{\mathbb A}
\def\Q{\mathbb Q}
\def\Z{\mathbb Z}
\def\C{\mathbb C}
\def\O{\mathcal O}
\def\mrm{\mathrm}
\def\mat#1#2#3#4{\left[
\begin{array}{ccc}
#1&#2\\#3&#4
\end{array}
\right]}
\def\matrix#1#2#3#4#5#6#7#8#9{\left[
\begin{array}{ccc}
#1&#2&#3\\#4&#5&#6\\#7&#8&#9
\end{array}
\right]}
\def\vvec#1#2#3{\left[\begin{array}{c}#1\\#2\\#3\end{array}\right]}
\newcommand{\dprime}{{\prime\prime}}
\newcommand{\cent}{\mathrm{cent}}
\begin{document}
\maketitle
\renewcommand{\thefootnote}{\fnsymbol{footnote}}
\footnote[0]{I am grateful to Takehiko Yasuda for teaching me and giving many important comments. Without his help, this paper would not have been possible. This work was partially supported by JSPS KAKENHI Grant Number JP18H01112.}

\begin{abstract}
In characteristic zero, quotient singularities are log terminal. Moreover, we can check whether a quotient variety is canonical or not by using only the age of each element of the relevant finite group if the group does not have pseudo-reflections. In positive characteristic, a quotient variety is not log terminal, in general. In this paper, we give an example of a quotient variety which is not log terminal such that the quotient varieties associated to any proper subgroups is canonical. In particular, we cannot determine whether a given quotient singularity is canonical by looking at proper subgroups.
\end{abstract}
\tableofcontents
\section{Introduction}
Quotient singularities form one of the most basic classes of singularities. They behave well in characteristic zero. In characteristic zero, any quotient variety has log terminal singularities. Moreover, for a finite group $G\subset\GL(d,\C)$ without pseudo-reflection, if we want to know the singularity of $\C^d/G$, we can use Reid--Shepherd-Barron--Tai criterion \cite[Theorem 3.21]{Kol:1}. Namely, the following three conditions are equivalent:
\begin{itemize}
\item $\C^d/G$ is canonical (resp. terminal);
\item $\C^d/G$ is canonical (resp. terminal) for all cyclic subgroup of $C$;
\item $\mrm{age}(g)\geq1$(resp. $>1$) for any $g\in G$;
\end{itemize}
where $\mrm{age}(g)$ is the age of $g\in G$. Diagonalizing an element $g\in G\backslash\{1\}$, we write
\[g=\matrix{\lambda_l^{a_1}}{ }{ }{ }{\ddots}{ }{ }{ }{\lambda_l^{a_d}}\]
where $l$ is the order of $g$, $\lambda_l$ is the primitive $l$-th root $\exp(2\pi\sqrt{-1}/l)$ and all integers $a_i$ satisfy $0\leq a_i\leq l-1$. Then the age of $g$ is defined by
\[\mrm{age}(g)=\frac 1l\sum_{i=1}^da_i.\] 

In positive characteristic, if the given finite group is tame, then the quotient variety is again log terminal and we can use the Reid--Shepherd-Barron--Tai criterion. But if the group is wild, there exists a quotient variety which is not log terminal. In this paper, we give an even more pathological example.
%----------MainTheorem--------------------
\begin{thm}[Main Theorem,Theorem \ref{main}]
Let $C_3$ be the cyclic group of order three and $C_3^2$ be the product of two copies of it. Suppose that the group $C_3^2$ is embedded in $\SL$ and this embedding makes $C_3^2$ small (it means the image of embedding of \(C_3^2\) has no pseudo-reflection), where $K$ is algebraically closed field of characteristic three. Then the quotient variety $\A^3/C_3^2$ is not log canonical.
\end{thm}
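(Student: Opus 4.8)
The plan is to reduce the statement to an explicit discrepancy computation and then exhibit a single divisor over $\A^3/C_3^2$ whose discrepancy is strictly less than $-1$.

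First I would pin down a normal form for the embedding. Since $K$ has characteristic three and $C_3^2$ is a $3$-group, every element of the image is unipotent, and a nontrivial unipotent $g\in\SL$ is a pseudo-reflection exactly when $\mrm{rank}(g-1)=1$, i.e. when its Jordan type is $[2,1]$. Hence the smallness hypothesis forces every nontrivial element to be a regular unipotent element (a single Jordan block of size three). Working in a common Jordan basis $e_1,e_2,e_3$ and using that $G$ is abelian (which, inside the Heisenberg group of upper unitriangular matrices, forces the two super-diagonal vectors $(a_i,c_i)$ of the generators to be linearly dependent over $K$, while regularity makes $a_1,a_2$ and $c_1,c_2$ linearly independent over $\F_3$), I would bring the action to the standard unipotent shape: on coordinates $z\mapsto z$, $y\mapsto y-c_g z$, $x\mapsto x-a_g y+\ast\,z$, where $g\mapsto c_g$ identifies $C_3^2$ with a two-dimensional $\F_3$-subspace $C\subset K$. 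The key structural consequence is that the common fixed locus is exactly the line $L=\langle e_1\rangle$ and that $G$ acts freely off $L$, so $\pi\colon\A^3\to X:=\A^3/C_3^2$ is étale in codimension one.

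Next I would set up the comparison of discrepancies through the cover. Because $G\subset\SL$ preserves the volume form $dx\wedge dy\wedge dz$, this form descends and trivializes the reflexive canonical sheaf of $X$; hence $K_X\sim 0$ is Cartier and log canonicity is well posed, with $K_{\A^3}=\pi^*K_X$. For any $G$-equivariant modification $\sigma\colon\widetilde V\to\A^3$ with prime exceptional $\widetilde E$, image $\bar E=\widetilde E/G$ on $\widetilde X=\widetilde V/G$, I would use the different of the finite separable quotient $\tilde\pi\colon\widetilde V\to\widetilde X$ to obtain, at the generic point of $\widetilde E$, the relation
\[
a(\bar E;X)=\frac{a(\widetilde E;\A^3)-d}{e},
\]
where $e$ is the ramification index and $d$ the different exponent of $\tilde\pi$ along $\widetilde E$. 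The crucial observation is that whenever $G$ acts freely at the generic point of $\widetilde E$ one has $e=1$, $d=0$, so $a(\bar E;X)=a(\widetilde E;\A^3)\ge 0$; thus no such divisor can violate log canonicity, and the pathology must come from a divisor lying in a \emph{codimension-one} fixed locus, where the ramification is wild.

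The construction then produces exactly such a divisor by two blow-ups. Blowing up the line $L$ gives an exceptional divisor $E_V$ with $a(E_V;\A^3)=1$; here $G$ still acts freely in codimension one, but its fixed locus becomes a curve $F$. A local computation shows that the action of $G$ on the normal bundle of $F$ is trivial to first order, so after blowing up $F$ the group fixes the new exceptional divisor $E_2$ pointwise; consequently $\tilde\pi$ is totally, wildly ramified along $E_2$ with inertia group all of $C_3^2$, so $e=9$. Tracking the two smooth blow-ups gives $a(E_2;\A^3)=2$, while Serre's formula $d=\sum_{g\neq 1}\mrm{ord}_{E_2}\!\big(g(t)-t\big)$ for a uniformizer $t$ of $E_2$ yields, from $g(t)-t=-c_g t^2+O(t^3)$ with $c_g\neq 0$, the value $\mrm{ord}_{E_2}(g(t)-t)=2$ for each of the eight nontrivial $g$, hence $d=16$. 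Therefore
\[
a(\bar E_2;X)=\frac{2-16}{9}=-\frac{14}{9}<-1,
\]
independently of the parameter $\lambda$ defining the embedding, and $X$ is not log canonical. I expect the main obstacle to be precisely this wild-ramification step: one must argue that blowing up is forced to create a pointwise-fixed exceptional divisor (the Reid--Shepherd-Barron--Tai input is empty, since $G$ is fixed-point-free in codimension one) and then justify the different-based discrepancy formula and compute $d$ correctly in the wild regime, where $d$ far exceeds the tame value $e-1$ that would otherwise keep the singularity log canonical.
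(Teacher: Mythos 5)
Your strategy is genuinely different from the paper's: you work equivariantly upstairs on $\A^3$ and transfer discrepancies through the quotient via the formula $a(\bar E;X)=\bigl(a(\widetilde E;\A^3)-d\bigr)/e$, whereas the paper computes the invariant ring explicitly (via Campbell--Shank--Wehlau) so that $X$ is an explicit hypersurface in $\A^4$, and then blows up the singular locus of $X$ directly, reading off integral discrepancies ($-3$ for $b=0$, $-2$ for $b\neq 0$) by adjunction. Much of your setup is correct: every nontrivial element is regular unipotent, the common fixed locus is a line $L$ so $\pi$ is \'etale in codimension one, $K_{\A^3}=\pi^\ast K_X$, the fixed locus on $Bl_L\A^3$ is a curve $F$ inside the exceptional divisor, the exceptional divisor $E_2$ of the second blow-up is fixed pointwise by $G$, $a(E_2;\A^3)=2$, and the transfer formula itself is valid. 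The fatal problem is in the ramification data you feed into it.

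The step ``$G$ fixes $E_2$ pointwise, hence $\tilde\pi$ is totally ramified along $E_2$ with $e=9$ and Serre's formula $d=\sum_{g\neq1}\mrm{ord}_{E_2}(g(t)-t)$ applies'' is false in wild characteristic. Having inertia group equal to all of $G$ does \emph{not} force $e=|G|$ when the residue fields (here function fields of surfaces over $K$) are imperfect: the extension of DVRs at the generic point of $E_2$ is \emph{fierce}, i.e.\ the residue extension $K(E_2)/K(\bar E_2)$ is purely inseparable while $e$ stays small. One can see this concretely in your chart, where $z=z''y^2$ and $t=y$: the invariant $z$ has $\mrm{ord}_{E_2}(z)=2$, while norm-type invariants such as $y^3-yz^2$ (already invariant for a $C_3$ subgroup, and similarly degree-$9$ norms for the full group) have odd order along $E_2$; since $v_{E_2}(K(X)^\ast)$ is a subgroup of $\Z$ containing coprime integers, $e=1$, not $9$. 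Moreover Serre's different formula requires $\O_L=\O_K[t]$, which fails precisely because the residue extension is nontrivial (indeed, with $e=1$ a uniformizer can be chosen \emph{inside} the invariant ring, and the naive sum would absurdly give $d=0$). The cleanest way to see the method cannot stand as written: run your argument verbatim for a subgroup $C_3\subset C_3^2$ --- every step (same $L$, same $F$, same pointwise-fixed $E_2$) goes through identically and would give $e=3$, $d=4$, hence a divisor of discrepancy $(2-4)/3=-2/3<0$ over $\A^3/C_3$, contradicting the fact, cited and used essentially in the paper, that $\A^3/C_3$ is canonical. With the correct ramification data ($e=1$, purely inseparable residue extension) the discrepancy of $\bar E_2$ is the integer $2-d$, and proving $2-d<-1$ requires computing the different of a degree-$9$ fierce extension --- a computation your proposal does not supply, and which is exactly the wild-ramification difficulty the paper's invariant-theoretic route avoids.
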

The pathological point of this example is that the quotient variety associated to any proper subgroup of $C_3^2$ is canonical, but the quotient variety by $C^2_3$ is not log terminal.
If the cyclic group $C_3$ of order three acts on the affine space $\A^3$ over $K$ linearly and small, the quotient variety has a crepant resolution, and so it is canonical \cite{Yas:1}. Since all nontrivial proper subgroups of $C_3^2$ are isomorphic to $C_3$, the quotient varieties by proper subgroups are canonical. But the above theorem says that the quotient variety $\A^3/C_3^2$ is neither canonical nor log terminal. This is in contrast to the fact that, in characteristic zero, the discrepancy of a quotient variety is determined by the age of elements of the group.

We give the proof of the main theorem in the following way. Firstly, we give all the small actions of $C^2_3$. It is not determined uniquely, but we can parametrize them by $a\in K\backslash\F_3,b\in K$. Next, we give the explicit form of the quotient varieties $X$ for each action of $C^2_3$. We will find that the quotient varieties are classified in two types separated by whether $b=0$ or not about the parameter $b$ of the action. Finally, we construct the proper birational morphism $Y\rightarrow X$ with exceptional divisors whose discrepancy is smaller than $-1$, which shows the quotient varieties are not log canonical. This construction given by a few times blow up along the singular loci.

As an application of the main result, we give a criterion when a quotient variety associated to a small wild finite group is log terminal in dimension three and characteristic three. According to the criterion, we can judge the singularity of a quotient variety by seeing the order of the acting group.
%----------------------coro:2--------------------------------- 
\begin{coro}[Corollary \ref{cor:1}]\label{cor:2}
Let $G$ be a wild small finite group of $\GL(3,K)$ where $K$ is an algebraically closed field. We write $\# G=3^rn$ where $r,n$ are positive integer and $n$ is not divided by three.
\begin{itemize}
\item[(i)] If $r=1$ then $\A^3_K/G$ is log terminal.
\item[(ii)] If $r\geq 2$ then $\A^3_K/G$ is not log canonical, in particular, not log terminal.
\end{itemize}
\end{coro}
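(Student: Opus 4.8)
The plan is to deduce both statements from two elementary facts about $3$-elements of $\GL(3,K)$, combined with the Main Theorem and the crepant resolution of $\A^3/C_3$ from \cite{Yas:1}. The first fact is that any $g\in\GL(3,K)$ of $3$-power order is unipotent: its eigenvalues are $3$-power roots of unity, hence all equal to $1$ in characteristic three, so $g=1+N$ with $N$ nilpotent and $N^3=0$, whence $g^3=(1+N)^3=1+N^3=1$. Thus every such $g$ has order dividing $3$ and determinant $1$, so a Sylow $3$-subgroup $P\subset G$ has exponent three and satisfies $P\subset\SL$. The second fact is that, $G$ being small, the map $\A^3\to\A^3/G$ is étale in codimension one, and likewise for every subgroup; hence for $H\subset G$ the induced finite morphism $h\colon\A^3/H\to\A^3/G$ is étale in codimension one, so $K_{\A^3/H}=h^{*}K_{\A^3/G}$, and for a divisor $E'$ over $\A^3/H$ lying over a divisor $E$ over $\A^3/G$ the discrepancies obey
\[
a(E',\A^3/H)+1=e\bigl(a(E,\A^3/G)+1\bigr)+\delta,\qquad e\geq 1,\ \delta\geq 0,
\]
where $\delta$ records the possibly wild part of the different. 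In particular a quasi-étale cover of a log canonical variety is again log canonical.

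For part (ii), suppose $r\geq 2$. Then $P$ has order at least $9$ and exponent three, hence is non-cyclic; since a non-cyclic $p$-group for odd $p$ contains a copy of $C_p\times C_p$, there is a subgroup $H\subset P\subset G$ with $H\cong C_3^2$. By the first fact $H\subset\SL$, and $H$ is small because any pseudo-reflection of $H$ would be one of $G$. Thus Theorem \ref{main} applies to $H$, giving a divisor $E'$ over $\A^3/H$ with $a(E',\A^3/H)<-1$. Letting $E$ be the image valuation over $\A^3/G$ and reading the displayed relation as $a(E,\A^3/G)+1=\tfrac1e\bigl(a(E',\A^3/H)+1-\delta\bigr)$ forces $a(E,\A^3/G)<-1$, so $\A^3/G$ is not log canonical. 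Equivalently, this is the contrapositive of the statement that quasi-étale covers of log canonical varieties are log canonical.

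For part (i), suppose $r=1$, so every Sylow $3$-subgroup of $G$ and of its subgroups is cyclic of order three. I would first reduce log terminality to a local statement: since the action is linear, for $x\in\A^3$ with stabilizer $G_x$ the completion of $\A^3/G$ at $[x]$ is isomorphic to that of $\A^3/G_x$ at the origin for the linear $G_x$-action, so $\A^3/G$ is log terminal precisely when $\A^3/H$ is log terminal at the origin for every stabilizer subgroup $H\subset G$. If $3\nmid\#H$ then $H$ is tame and $\A^3/H$ is log terminal by the Reid--Shepherd-Barron--Tai criterion, while if $3\mid\#H$ the wild part of $H$ is a single $C_3$, whose quotient $\A^3/C_3$ is canonical by the crepant resolution of \cite{Yas:1}. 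The remaining input is that along every exceptional divisor the wild inertia is at most this $C_3$, which is crepant, whereas the prime-to-three inertia contributes non-negatively, so all discrepancies stay above $-1$.

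The main obstacle is exactly this last point of part (i). The stabilizer reduction does not dispose of the deepest stratum, the cone vertex, whose stabilizer is all of $G$, and there a Sylow $3$-subgroup need not be normal (for instance for $A_4$), so one cannot simply exhibit $\A^3/G$ as a tame quotient of the canonical variety $\A^3/C_3$. I expect to handle this by analyzing the cone structure of $\A^3/G=\Spec K[x,y,z]^G$ together with a direct discrepancy estimate along each exceptional divisor, using that the wild contribution is governed by an order-three inertia group and is crepant in the sense of \cite{Yas:1}, while the prime-to-three part only improves discrepancies. The favorable inequality in the display above, namely that wild ramification can only raise discrepancies on a cover, is precisely what makes this direction succeed, in sharp contrast to part (ii).
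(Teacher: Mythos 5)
Your part (ii) is correct and is essentially the paper's own route: unipotence of $3$-power-order matrices puts a Sylow $3$-subgroup inside $\SL$ with exponent three (the paper instead proves $P\cong C_3^r$ via a centralizer computation, Lemma \ref{lem:2}, but your argument works equally well), smallness passes to the subgroup $H\cong C_3^2$, Theorem \ref{main} applies, and non-log-canonicity descends along the finite quasi-\'etale morphism $\A^3/H\to\A^3/G$; your displayed relation $a(E')+1=e\bigl(a(E)+1\bigr)+\delta$ with $e\geq1$, $\delta\geq0$ is exactly what the paper's Theorem \ref{thm:1} extracts from \cite[2.41]{Kol:1}. The one point you elide is why the ``image valuation'' $E$ is again divisorial and can be realized on a model $Y$ receiving a finite map from a model $Y'$ of $E'$; this is not automatic and is the entire purpose of the paper's Lemma \ref{lem:3} (a Galois-closure construction). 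Since that statement is also available as a known result on quasi-\'etale covers, I would not count this as a gap in part (ii).

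Part (i) is where you have a genuine gap --- you say so yourself --- and the obstacle you stop at is illusory. You assume that to exploit the canonicity of $\A^3/C_3$ one must present $\A^3/G$ as a quotient of $\A^3/H$ by a group, hence need $H$ normal (your $A_4$ worry); you then fall back on a stabilizer stratification that is circular at the cone point (whose stabilizer is all of $G$), plus an unproven ``direct discrepancy estimate'' there. But normality is never needed: for an arbitrary subgroup $H\subset G$, the inclusion of invariant rings $K[x,y,z]^G\subset K[x,y,z]^H$ gives a finite dominant morphism $\A^3/H\to\A^3/G$ of degree $[G:H]=n$, \'etale in codimension one because both groups are small. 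The paper's Theorem \ref{thm:2} then runs your displayed formula \emph{downward} instead of upward: given any divisor $E$ over $X=\A^3/G$, pull it back along the normalized fiber product to get $\rho^\ast E=\sum_i r_iE_i'$ with $\sum_i r_ie_i=n$; since $n$ is prime to $3$, some $r_i$ is prime to $3$, so the ramification along that $E_i'$ is tame and $\delta=0$ for that component; then canonicity of $\A^3/C_3$ (\cite{Yas:1}) gives
\[
a(E,X)+1=\frac{a(E_i',X')+1}{r_i}\geq\frac{1}{r_i}>0,
\]
so $X$ is log terminal. The asymmetry you missed is that when descending log terminality one gets to \emph{choose} which component upstairs to use, and the prime-to-$3$ degree guarantees a tamely ramified choice exists, so wild ramification (your $\delta$) never enters. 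With this observation your part (i) collapses to a few lines, and the stabilizer reduction and vertex analysis can be discarded.
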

This follows from the claim that if $X^\prime$ is not log terminal and a morphism $\pi\colon X^\prime\rightarrow X$ is finite dominant and \'etale in codimension one then $X$ is not log terminal. We give the proof of the claim and the log canonical version of it at the same time. 

Lastly, we explain why we consider characteristic three in dimension three.
In dimension two, every wild action has pseudo-reflection. Thus the smallest example of small actions are three dimensional. According to \cite[Corollary 1.4]{Yas:2} and Theorem 6.4 of the previous paper,  the quotient variety \(\A^3/G\) with wild small action in characteristic \(p>3\) is not log canonical. On the other hand, by looking at the Jordan normal form, we see that any wild action in characteristic two and dimension three has pseudo reflection. So, in dimension three, characteristic three is the only possibility for interesting examples existing.

This paper is organized as follows. In section two, we list some preliminaries. In section three, we consider small actions of $C_3^2$. In section four, we write quotient varieties explicitly. In section five, we give the proof of main theorem by constructing some proper birational morphisms. In section six, we prove the Corollary \ref{cor:2} as an application of the main theorem.

A large part of this paper is based on the master thesis of the author.

The main result of this paper was announced in \cite{Kin} without proof.
%%%%%%%%%%%%%%%%%%%%%%%%%%%%%%%%%%%%%%%%%%%%%%%%%%%%%%%%
\section{Preliminaries}
We fix an algebraically closed field $K$ of characteristic three. We denote the $n$-dimensional affine space over $K$ by $\A^n$, that is, the spectrum of the polynomial ring $K[x_1,\ldots,x_n]$. We regard $\A^3$ as a vector space on $K$ and denote its general linear group by $\GL(3,K)$. Let $I$ be the identity element of $\GL(3,K)$ and $\SL$ be the special linear subgroup. Note that the eigenvalues of an element of $\GL(3,K)$ whose order is three satisfy the equation $x^3=1$. So order three elements are contained in $\SL$.

When a group $G$ is embedded in $\mathrm{GL}(n,K)$ and $G$ acts on $n$-dimensional vector space $K^n$ via this embedding, we call $g\in G$ is pseudo-reflection if $(K^n)^g$ is $(n-1)$-dimensional subspace of $K^n$. When $G$ has no pseudo-reflection, we say that $G$ is \textit{small}. 

A variety means a separated integral scheme of finite type over $K$. Let $X$ be a $n$-dimensional normal variety. We define the \textit{canonical sheaf} $\omega_X$ of $X$ by $\omega_X=j_\ast(\bigwedge^n\Omega_{X_{reg}})$ where $X_{reg}=X\backslash\Sing(X)$ and $\Omega_{X_{reg}}$ is the $\O_{X_{reg}}$-module of differentials. We call the divisor class $K_X$ defined by $\omega_X$ the \textit{canonical divisor} of $X$. If $mK_X$ is a Cartier divisor for some $m\in\Z\backslash\{0\}$, we say $X$ is \textit{$\Q$-Gorenstein}.

Let $X$ be a normal variety. A prime divisor $E$ on  a normal variety $Y$ given with a birational morphism $f\colon Y\rightarrow X$ is called \textit{a divisor over $X$}. If there is a birational map $g\colon Y\dashrightarrow Y^\prime$, we denote the closure $\overline{g(E)}$ of the image of $E$ by $\cent_{Y^\prime}(E)$. Let $E^\prime$ be another divisor over $X$, which is a divisor on $Y^\prime$ with a birational morphism $f^\prime\colon Y^\prime\rightarrow X$. There is a natural birational map $(f^\prime)^{-1}\circ f\colon Y\rightarrow X\dashrightarrow Y^\prime$. If $\cent_{Y^\prime}(E)=E^\prime$, we say $E$ and $E^\prime$ are equivalent. This defines an equivalence relation on the set of divisors over $X$. The equivalence class of $E$ is determined by the associated valuation on $K(X)$. In other words, the set of equivalence classes of divisors over $X$ can be embedded in the set of valuations on $K(X)$.

Let $X$ be a $\Q$-Gorenstein normal variety. For a birational morphism $f\colon Y\rightarrow X$ where $Y$ is a normal variety, we write
\[
K_Y=f^\ast K_X+\sum_Da_DD
\]
where $D$ runs over the $f$-exceptional prime divisors on $Y$. We call $a_E$ the discrepancy of $E$. If $E$ is not an exceptional divisor, its discrepancy is zero.
Note that the discrepancy is determined by the valuation on $K(X)$ corresponding to the divisor $E$, equivalently by the equivalence class of $E$. If
\[
\inf\{a_E|E\text{ is a divisor over X}\}\geq-1\ (\text{resp.}>-1),
\]
we say $X$ has \textit{log canonical} (resp. \textit{log terminal}) \textit{singularities}, or simply $X$ is \textit{log canonical} (resp. \textit{log terminal}). 

%%%%%%%%%%%%%%%%%%%%%%%%%%%%%%%%%%%%%%%%%%%%%%%%%%%%%%%%%
\section{Actions of $C^2_3$ on affine space}
In this section, we describe all the small actions of $C^3_2$ on $\A^3$. All the not necessarily small embeddings in $\SL$ are given in section four in \cite{CSW:1}. We consider representations that $C_3^2$ is small and it is classified as type (1,1,1) in \cite{CSW:1}. From \cite[Proposition 4.1,Proposition 4.3]{CSW:1}, we get the following proposition.
\begin{prop} For any embedding to $\SL$ of $C_3^2$ for which $C_3^2$ is small, there exists $a\in K\backslash\F_3,b\in K$ such that $\sigma(U(a,b))$ is conjugate with the image of embedding where $U(a,b)$ is a subgroup of the additive group $(K^2,+)$ generated by $(1,0),(a,b)$ and $\sigma$ is the group homomorphism defined by
$$\sigma(c_1,c_2)=\matrix 1{-c_1}{c_1^2+c_2}01{c_1}001.$$
\end{prop}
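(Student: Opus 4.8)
The plan is to reduce the classification to the linear algebra of commuting unipotent matrices in characteristic three and then to read off the parameters $a,b$ directly. Throughout, a nonidentity element $g$ of the image has order three, so (char $=3$) its characteristic polynomial is $(x-1)^3$ and $g$ is unipotent; writing $N=g-I$ we have $N^3=0$. The three possible Jordan types are the single block of size three ($N^2\ne0$), the type $[2,1]$ ($N\ne0$, $N^2=0$), and the identity. First I would observe that smallness is equivalent to requiring that every nonidentity element be a single Jordan block: an element of type $[2,1]$ has a two-dimensional fixed space $\ker N$ and is therefore a pseudo-reflection, whereas a single block has one-dimensional fixed space. Hence a small embedding is precisely one in which every nonidentity element is regular unipotent.

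Next I would normalize one generator. Since all regular unipotent matrices are conjugate in $\GL(3,K)$, I conjugate the whole image so that a chosen nonidentity generator $g_1$ equals $\sigma(1,0)$; set $N_1=\sigma(1,0)-I$, a regular nilpotent. Because $C_3^2$ is abelian, the second generator $g_2$ lies in the centralizer of $g_1$. The centralizer of a regular element is the algebra of polynomials in it, so $Z_{\GL}(g_1)=\{\alpha I+\beta N_1+\gamma N_1^2:\alpha\ne0\}$; as $g_2$ is unipotent, its diagonal entry forces $\alpha=1$, whence $g_2=I+\beta N_1+\gamma N_1^2$ lies in the two-dimensional commutative unipotent group $\mathcal U=\{I+\beta N_1+\gamma N_1^2\}$.

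A direct expansion, using $N_1^3=0$ together with $3=0$, shows that $\sigma$ is a homomorphism from the additive group $(K^2,+)$ and that $\sigma(c_1,c_2)=I+c_1N_1+(c_1-c_1^2-c_2)N_1^2$; in particular $\sigma$ is a bijection of $K^2$ onto $\mathcal U$, hence a group isomorphism $(K^2,+)\cong\mathcal U$. Writing $g_1=\sigma(1,0)$ and $g_2=\sigma(a,b)$ for the unique $(a,b)$ with $\sigma(a,b)=g_2$, the image becomes $\sigma(\angpare{(1,0),(a,b)})=\sigma(U(a,b))$, the $\F_3$-span being forced because $\sigma$ is additive and each generator has order three.

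It remains to translate smallness into a condition on $(a,b)$, and this is the step I expect to carry the real content. From the formula above, the nilpotent part of $\sigma(c_1,c_2)$ is $c_1N_1+(c_1-c_1^2-c_2)N_1^2$, whose square equals $c_1^2N_1^2$; thus $\sigma(c_1,c_2)$ is regular unipotent exactly when $c_1\ne0$, a pseudo-reflection when $c_1=0\ne c_2$, and the identity when $c_1=c_2=0$. The nonidentity elements of $U(a,b)$ are the vectors $i(1,0)+j(a,b)=(i+ja,\,jb)$ with $(i,j)\in\F_3^2\setminus\{0\}$, so smallness is equivalent to the implication $i+ja=0\Rightarrow i=j=0$ for $i,j\in\F_3$. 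For $j\ne0$ this says $a\ne-i/j$, i.e. $a\notin\F_3$, while $j=0$ forces $i=0$ automatically; hence smallness holds if and only if $a\notin\F_3$, with $b\in K$ unconstrained. This simultaneously guarantees that $(1,0),(a,b)$ are $\F_3$-independent, so that $U(a,b)\cong C_3^2$, and completes the identification of the image with $\sigma(U(a,b))$ up to conjugacy. The main obstacle is the bookkeeping of this final equivalence, namely verifying that no $\F_3$-combination of the generators degenerates to type $[2,1]$, together with confirming that the conjugation freedom used to put $g_1$ in standard form suffices to realize an arbitrary small image in this normal form.
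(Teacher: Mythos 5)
Your proof is correct, but it takes a genuinely different route from the paper's. The paper does not argue from scratch: it quotes Propositions 4.1 and 4.3 of \cite{CSW:1} to obtain, first, that any embedding of $C_3^2$ in $\SL$ is conjugate to $\sigma(U)$ for some finite subgroup $U\subset(K^2,+)$, and second, that $U$ can be renormalized so that one generator becomes $(1,0)$; smallness is then used twice, much as you use it, to rule out a generator of the form $(0,v)$ and to force $a\notin\F_3$. You instead re-derive the normal form by hand: smallness plus characteristic three forces every nontrivial element to be regular unipotent, one generator can be conjugated to $\sigma(1,0)$, and commutativity places the second generator in the centralizer of that regular unipotent, which is the group of invertible elements of $K[N_1]$; combined with your verification that $\sigma$ is an additive isomorphism of $(K^2,+)$ onto $\{I+\beta N_1+\gamma N_1^2\}$, this recovers exactly what the paper imports from \cite{CSW:1}. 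What your approach buys is self-containedness and a conceptual explanation of why the normal form exists at all (it is nothing but the centralizer of a regular unipotent element); what the paper's approach buys is brevity and economy, since \cite{CSW:1} is needed anyway in Section 4 to compute the invariant rings, and its propositions treat general elementary abelian $p$-groups rather than only this case. Two small remarks: the ``bookkeeping'' you flag at the end as a remaining obstacle is already discharged by your own computation $N^2=c_1^2N_1^2$, which settles the Jordan type of every $\F_3$-combination, and by the Jordan-form argument for the conjugation of $g_1$; and the $\F_3$-independence of $(1,0),(a,b)$ follows at once from the injectivity of $\sigma$ together with the image having nine elements, so it need not be tracked separately.
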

\begin{proof}\label{prop:7.1}
For simplicity, we denote the image of the embedding of $C_3^2$ in $\SL$ by $C_3^2$ again. From \cite[Proposition 4.1]{CSW:1}, we get the finite subgroup $U$ of $(K^2,+)$ whose image by $\sigma$ is conjugate with $C_3^2$. Then $U$ is generated by two elements $(u,v),(u^\prime,v^\prime)$ and these are independent as the elements of $\F_3$-vector space since $U$ is isomorphic to $C_3^2$. If $u=0$ then $\sigma((u,v))\in\sigma(U)$ is a pseudo-reflection. This contradicts the assumption that $C_3^2$ is small. So we get $u\ne 0$. From \cite[Proposition 4.3]{CSW:1} , $U$ is conjugate to
$$U^\prime=\{u^{-1}(c_1,-u^{-2}vc_1+u^{-1}c_2)|(c_1,c_2)\in U\}$$
which is generated by $(1,0)$ and $(u^{-1}u^\prime,-u^{-2}vu^\prime+u^{-2}v^\prime)$. So we put $a=u^{-1}u^\prime,b=-u^{-2}vu^\prime+u^{-2}v^\prime$ and replace $U$ by $U^\prime$ , then we get the required form of $U$. Note that the above generators of $U$ are linearly independent over $\F_3$. If $a\in\F_3$, since $(1,0)$ and $(a,b)$ are independent as $\F_3$ vectors, we deduced $b\ne0$. Then $(0,b)=(a,b)-a(1,0)\in U$ and $\sigma((0,b))$ is pseudo-reflection. This contradicts the assumption that $\sigma(U)$ is small. So we get $a\not\in\F_3$. 
\end{proof}

%%%%%%%%%%%%%%%%%%%%%%%%%%%%%%%%%%%%%%%%%%%%%%%%%%%
\section{The quotient varieties associated to actions}
In this section, we compute the quotient varieties associated to the small actions of $C_3^2$. 
By \cite[Theorem 3.3,Theorem 6.3]{CSW:1}, we get concrete representations of quotient varieties.
\begin{prop}\label{invC2}
The quotient variety $X=\A^3/\sigma(U(a,b))$ is embedded in $\A^4_{x_1,x_2,x_3,x_4}$ as a hypersurface. We can represent it as
\begin{align*}
X=\left\{\begin{array}{cc}
V(x_2^9-x_3^2+x_1^9x_4+x_1^6H(x_1,x_2))&(b=0)\\
V(\alpha b^2x_2^5-b^4x_3^3-b^{10}x_1^6x_4+\alpha b^2x_1x_2^3x_3+x_1^3F(x_1,x_2,x_3))&(b\ne0)
\end{array}\right..
\end{align*}
where $\alpha=a^3-a$ and $H(s_1,s_2),\ F(s_1,s_2,s_3)$ are polynomials on $K$ defined by
\begin{align*}
H(s_1,s_2)&=(1+\alpha^2)x_2^6-\alpha^2x_1^2x_2^5+(1+\alpha^2)^2x_1^6x_2^2+\alpha^2(1+\alpha^2)x_1^8x_2^2+\alpha^4x_1^{10}x_2,\\
F(s_1,s_2,s_3)&=c_1s_2^4+c_2s_1s_2^2s_3+c_3s_1^2s_3^2+c_4s_1^3s_2^3+c_5s_1^4s_2s_3+c_6s_1^6s_2^3+c_7s_1^7s_3.
\end{align*}
for some $c_1,\dots,c_7\in K$.
\end{prop}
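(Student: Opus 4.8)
The plan is to identify $X$ with $\Spec R$, where $R=K[x,y,z]^{G}$ is the ring of invariants of $G:=\sigma(U(a,b))$ acting on the coordinate ring $K[x,y,z]$ of $\A^3$, and then to produce an explicit minimal presentation of $R$. A direct computation of the induced action shows that, for $(c_1,c_2)\in U(a,b)$, the relevant generator sends $z\mapsto z$, $y\mapsto y-c_1z$ and $x\mapsto x+c_1y+(c_1^2-c_2)z$. In particular $z$ is a degree-one invariant, so $x_1=z$ is forced to be a generator; the first coordinates $c_1$ run exactly over the two-dimensional $\F_3$-subspace $A:=\F_3+\F_3 a\subset K$ (this uses $a\notin\F_3$), while the second coordinate is determined by $c_1$ via $c_2=n(c_1)b$, where $n(c_1)$ is the coefficient of $a$ in the expansion $c_1=m+na$.

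First I would write down enough explicit invariants to generate $R$. The vertical norm $N:=\prod_{\lambda\in A}(y-\lambda z)$ is $G$-invariant, and evaluating the additive ($\F_3$-linear) polynomial $\prod_{\lambda\in A}(T-\lambda)=T^9-(1+\alpha^2)T^3+\alpha^2 T$ gives $N=y^9-(1+\alpha^2)y^3z^6+\alpha^2 yz^8$ with $\alpha=a^3-a$; note that the coefficients $1+\alpha^2$ and $\alpha^2$ are exactly those appearing in $H$. The quadric $q:=y^2-xz$ transforms by $q\mapsto q+c_2z^2$, so it is genuinely $G$-invariant precisely when $b=0$, and this dichotomy is the origin of the two cases. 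When $b\neq0$ one instead twists: since $\eta:=y^3-yz^2$ satisfies $\eta\mapsto\eta-\alpha z^3$ under the class of $\sigma(a,b)$, the combination $\xi:=b\,\eta+\alpha\,qz$ is $G$-invariant of degree three. The remaining generators (a second degree-nine invariant when $b=0$, and degree-five and degree-nine invariants when $b\neq0$) are produced in the same way, by taking norms $\prod_{(c_1,c_2)\in U(a,b)}(x+c_1y+(c_1^2-c_2)z)$ over the $x$-orbit and forming the appropriate symmetric combinations.

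With the generators in hand I would invoke \cite[Theorem 3.3 and Theorem 6.3]{CSW:1}, which for the type $(1,1,1)$ representation guarantee that $R$ is generated by four invariants and is a hypersurface ring, i.e.\ that there is exactly one relation among them; $X$ is then the corresponding hypersurface in $\A^4_{x_1,x_2,x_3,x_4}$. Setting $x_1=z$, taking $x_2=q$ (resp.\ $x_2=\xi$) and the two higher invariants as $x_3,x_4$, the single relation is obtained by expressing the invariant that occurs as $x_3^2$ (resp.\ $x_3^3$) in terms of the others. Keeping track of the weighted grading — $(1,2,9,9)$ with relation in degree $18$ when $b=0$, and $(1,3,5,9)$ with relation in degree $15$ when $b\neq0$ — pins down the shape of the equation and constrains the remaining coefficients to assemble into the stated polynomials $H$ and $F$.

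The main obstacle is the explicit expansion of the $x$-norms and the ensuing coefficient bookkeeping needed to reduce the single relation to the displayed normal forms for $H$ and for $F$ (including identifying the constants $c_1,\dots,c_7$), together with the verification that the four invariants form a minimal generating set and that the relation is unique. This is exactly the content carried by \cite[Theorem 3.3 and Theorem 6.3]{CSW:1}; the work specific to the present proposition is the specialization of that machinery to $U(a,b)$ and the separate treatment of $b=0$ and $b\neq0$, which is unavoidable since the degree-two invariant $q$ exists only in the former case.
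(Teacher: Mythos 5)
Your proposal is correct and takes essentially the same approach as the paper: both reduce the proposition to \cite[Theorem 3.3, Theorem 6.3]{CSW:1}, with the dichotomy $b=0$ versus $b\ne0$ determining which of the two theorems applies (the paper phrases this as checking whether the determinants $b$ and $a^3-a$ of the matrices $\mat 1a0b$ and $\mat 1a1{a^3}$ vanish). Your explicit constructions — the dual action, the norm $\prod_{\lambda\in\F_3+\F_3a}(y-\lambda z)$, the quadric $q=y^2-xz$, the twisted invariant $\xi=b\eta+\alpha qz$, and the weighted-degree bookkeeping — are accurate but supplementary; the paper simply verifies the determinant hypotheses and reads the four generators and the single relation directly off the cited theorems.
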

\begin{proof}
To compute invariant rings, we divided the groups $U(a,b)$ in two cases: one is when $b=0$, and the other is $b\ne 0$. 

\noindent{\bf Case: $b=0$}

We get $a\not\in\F_3$ from the condition given in Proposition.\ref{prop:7.1}. So,
\begin{align*}
\det\mat 1a00&=0,\\
\det\mat 1a1{a^3}&=a^3-a\ne0.
\end{align*}
Therefore we get
$$K[x,y,z]^{\sigma(U(a,0))}\cong K[x_1,x_2,x_3,x_4]/(x_2^9-x_3^2+x_1^9x_4+x_1^6H(x_1,x_2))$$
from \cite[Theorem 3.3]{CSW:1}. 

\noindent{\bf Case: $b\ne0$}

Now
\begin{align*}
\det\mat 1a0b&=b\ne0,\\
\det\mat 1a1{a^3}&=a^3-a\ne0.
\end{align*}
Computing the invariant ring according to \cite[Theorem 6.3]{CSW:1}, we get
$$K[x,y,z]^{\sigma(U(a,b))}\cong K[x_1,x_2,x_3,x_4]/(\alpha^3b^2x_2^5-b^4x_3^3-b^{10}x_1^6x_4+x_1^3F(x_1,x_2,x_3)).$$
\end{proof}
%%%%%%%%%%%%%%%%%%%%%%%%%%%%%%%%%%%%%%%%%%%%%%
\section{Singularity}
In this section, we prove any quotient variety associated to a small action of $C_3^2$ is non log canonical. This proof is given by explicit calculations.
\begin{thm}\label{main}
When $C_3^2$ acts on $\A^3$ via an embedding $\SL$ for which $C_3^2$ is small, the quotient variety is not log canonical.
\end{thm}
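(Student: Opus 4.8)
Since Proposition \ref{invC2} exhibits $X$ as a hypersurface $V(g)\subset\A^4$, it is Gorenstein with trivial dualizing sheaf: the Poincar\'e residue
\[
\eta=\frac{dx_1\wedge dx_2\wedge dx_4}{\partial g/\partial x_3}
\]
is a nowhere-vanishing global generator of $\omega_X$, so $K_X$ is Cartier and $\mrm{div}(\eta)=0$. (In characteristic three $\partial g/\partial x_3=-2x_3=x_3$ when $b=0$, while $\partial g/\partial x_3=\alpha b^2x_1x_2^3+\cdots$ when $b\ne0$, because $\partial(x_3^3)/\partial x_3=0$.) It therefore suffices to produce a single divisor $E$ over $X$ with discrepancy $a_E<-1$; and since $\eta$ generates $\omega_X$, the value $a_E$ is simply the order of vanishing of the pullback of $\eta$ along $E$, measured on a normal model $Y\to X$ against a local generator of $\omega_Y$.

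First I would determine $\Sing X$ by the Jacobian criterion, taking care that in characteristic three the derivatives of the ninth-power, cube, and (only partially) square terms behave unusually. When $b=0$ the only informative equations are $\partial g/\partial x_4=x_1^9=0$ and $\partial g/\partial x_3=x_3=0$; when $b\ne0$ they are $\partial g/\partial x_4=-b^{10}x_1^6=0$ and $\partial g/\partial x_2=2\alpha b^2x_2^4+\cdots=0$. In both regimes these force $x_1=x_2=x_3=0$, so $\Sing X$ is exactly the line $L=\{x_1=x_2=x_3=0\}$, the $x_4$-axis.

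I would then blow up along $L$, take the strict transform, and normalize, repeating along the new singular loci. The mechanism producing a forbidden discrepancy is that the tangent cone of $X$ along $L$ is non-reduced: it is $x_3^2=0$ when $b=0$ and $x_3^3=0$ when $b\ne0$. Thus the first blow-up does not by itself settle the question (for $b=0$ it is even crepant), and its strict transform, of local shape $z^2=t^7(\cdots)$ respectively $z^3=t^2(\cdots)$, is singular in codimension one, hence not normal. The whole of the sub-$(-1)$ discrepancy is created upon normalizing. Concretely, for $b=0$ one sets $w=z/t^3$ to obtain the normal model $Y=\{w^2=t\cdot(\text{unit})\}$, on which $x_3=t^4w$ and $\mrm{ord}_{\bar E}(t)=2$; pulling back $\eta$ gives $\phi^\ast\eta=(\text{unit})\cdot t^{-3}\,\omega_0$ with $\omega_0$ a generator of $\omega_Y$, whence $a_{\bar E}=-6$. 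Equivalently $\bar E$ is the monomial valuation with weights $(2,2,9,0)$ on $(x_1,x_2,x_3,x_4)$, balancing the dominant monomials $x_2^9,\,x_3^2,\,x_1^9x_4$, and the adjunction identity $a_{\bar E}=\sum_iw_i-1-\mrm{ord}_w(g)=13-1-18=-6$ recovers it. For $b\ne0$ normalizing the cuspidal shape $z^3=t^2(\cdots)$ yields the monomial valuation with weights $(3,3,5,0)$, balancing $x_2^5$ and $x_3^3$, and the same identity gives $a_{\bar E}=11-1-15=-5$. In either case $a_{\bar E}<-1$, so $X$ is not log canonical.

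The step I expect to be the main obstacle is controlling this non-normality in characteristic three. One must verify that the strict transforms are genuinely non-normal — so that the first, nearly crepant, blow-up does not mislead — and carry out each normalization explicitly, since wild ramification makes the normalized model and the order of the exceptional parameter (the factors $t^4w$ and $\mrm{ord}_{\bar E}(t)=2$ above) behave quite differently from the tame case; the cubic tangent cone when $b\ne0$ makes this step the most delicate. A routine but unavoidable accompaniment is checking that $\mrm{ord}_w(g)$ is evaluated correctly, i.e. that none of the many lower monomials of $H$ or $F$ has weight below that of the balanced terms, and running the two regimes $b=0$ and $b\ne0$ in parallel with their own weight systems.
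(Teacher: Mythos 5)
Your proposal is correct in substance, and although it shares the paper's basic strategy---blow up the singular line $V(x_1,x_2,x_3)$ and exploit adjunction for hypersurfaces in $\A^4$---its execution is genuinely different from the paper's. The paper never normalizes anything by hand: it runs a chain of \emph{ordinary} blow-ups of the smooth ambient fourfold (four of them when $b=0$, two when $b\ne0$), tracks $K_{W_i}$ and the strict transforms $X_i$ purely as divisors on the smooth $W_i$, and applies adjunction only at the last stage, where the strict transform is at last regular in codimension one and hence normal by Serre's criterion; this yields exceptional divisors with coefficients $-3$ (case $b=0$) and $-1,-2$ (case $b\ne0$). You instead stop after one blow-up, confront the non-normality of the strict transform directly (for $b=0$ via $w=z/t^3$, giving $w^2=t\cdot(\text{unit})$), and read off the discrepancy from the pullback of the residue generator $\eta$ of $\omega_X$, equivalently from the identity $a=\sum_iw_i-1-\mathrm{ord}_w(g)$ for a monomial valuation. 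Your numbers check out: the weights $(2,2,9,0)$ and $(3,3,5,0)$ do balance the leading terms, no monomial of $x_1^6H$ (minimal weight $24$) or of $x_1^3F$ (minimal weight $21$) interferes, and in the case $b=0$ your divisor is in fact the very valuation the paper ends with, since the paper's restriction $E_4|_{X_4}$ is non-reduced---it equals twice the prime divisor $V(x_1,t_4)$, whose valuation has weights $(2,2,9,0)$ and true discrepancy $-6$, matching yours (for $b\ne0$ the paper's witness is a different valuation, with even smaller discrepancy). What your route buys is brevity and an intrinsic identification of the witnessing valuation; what it costs is exactly the step you flag, the wild normalization, which is delicate for the cubic cone at $b\ne0$. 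The clean way to discharge it, and to make your ``adjunction identity'' rigorous without verifying that the exceptional restriction is reduced or that the strict transform is normal along it, is the conductor: the strict transform is a reduced $S_2$ (indeed Gorenstein) hypersurface, so its normalization $\nu\colon \tilde X^\nu\rightarrow\tilde X$ satisfies $K_{\tilde X^\nu}=\nu^\ast K_{\tilde X}-C$ with $C\geq 0$, whence every component of the preimage of the exceptional locus has discrepancy at most $\sum_iw_i-1-\mathrm{ord}_w(g)$, i.e.\ at most $-6$ resp.\ $-5<-1$, and no explicit normalization is needed. The paper's longer route makes the opposite trade: everything remains an explicit chart computation on smooth ambient spaces, and dualizing sheaves of non-normal varieties never enter the argument.
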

\begin{proof}
Case $b=0$:

We can construct a proper birational morphism $\varphi\colon X_4\rightarrow X$ with a normal variety $X_4$ by four times blowing-up along the singular locus. We illustrate this situation by following diagram. 
$$\xymatrix@R=7pt@C=10pt{
Bl_{L_3}(W_3)\ar@{}[d]|\bigcup&Bl_{L_2}(W_2)\ar@{}[d]|\bigcup&Bl_{L_1}(W_1)\ar@{}[d]|\bigcup&Bl_{L_0}(W_0)\ar@{}[d]|\bigcup&\\
W_4\ar[r]^{\varphi_4}\ar@{}[d]|\bigcup&W_3\ar[r]^{\varphi_3}\ar@{}[d]|\bigcup&W_2\ar[r]^{\varphi_2}\ar@{}[d]|\bigcup&W_1\ar[r]^{\varphi_1}\ar@{}[d]|\bigcup&W_0\ar@{}[d]|\bigcup\\
X_4\ar[r]^{\varphi_4}\ar@{}[d]|\bigcup&X_3\ar[r]^{\varphi_3}\ar@{}[d]|\bigcup&X_2\ar[r]^{\varphi_2}\ar@{}[d]|\bigcup&X_1\ar[r]^{\varphi_1}\ar@{}[d]|\bigcup&X\ar@{}[d]|\bigcup\\
L_4&L_3&L_2&L_1&L_0
}$$
In this diagram, $L_i$ is the singular locus of $X_i$, $Bl_{L_i}(W_i),X_{i+1}$ are blow-ups of $W_i,X_i$ along $L_i$, $W_i$ is an open subvariety of $Bl_{L_i}(W_i)$ which contains $X_i$ for each $i=0,1,\ldots,5$. Morphisms $\varphi_i$ are the restrictions of the morphisms $Bl_{L_{i-1}}(W_{i-1})\rightarrow W_{i-1}$ determined by blow-ups. We represent these varieties explicitly by direct computation.

Firstly, $W_0=\A^4_{x_1,\ldots,x_4}$ and from Proposition \ref{invC2},
\begin{align*}
X&=V(x_2^9-x_3^2+x_1^9x_4+x_1^6H(x_1,x_2))\subset W_0,\\
L_0&=V(x_1,x_2,x_3).
\end{align*}
Blowing-up $W_0$ and $X$ along $L_0$, we get $W_1$ is covered by two open affine subvarieties $W_{1,t}=\A^4_{x_1,x_4,u_t,v_t},W_{1,u}=\A^4_{x_2,x_4,t_u,v_u}$ and
\begin{align*}
X_1\cap W_{1,t}&=V(x_1^7(u_t^9+x_4+x_1^3h_t)-v_t^2),\\
X_1\cap W_{1,u}&=V(x_2^7(1+t_u^9x_4+x_2^3h_u)-v_u^2),
\end{align*}
where $h_t=x_1^{-6}H(x_1,u_tx_1),\ h_u=x_2^{-6}t_u^6H(t_ux_2,x_2)$, which are polynomials in $x_1,u_t$ and $x_2,t_u$. The morphism $\varphi_1$ corresponds to the following homomorphisms of rings 
\begin{align*}
(\varphi_1|_{W_1,t})^\#:&K[x_1,x_2,x_3,x_4]\rightarrow K[x_1,x_4,u_t,v_t]\\
&[x_1,x_2,x_3,x_4]\mapsto[x_1,u_tx_1,v_tx_1,x_4]\\
 (\varphi_1|_{W_1,u})^\#:&K[x_1,x_2,x_3,x_4]\rightarrow K[x_2,x_4,t_u,v_u]\\
&[x_1,x_2,x_3,x_4]\mapsto[t_ux_2,x_2,v_ux_2,x_4]
\end{align*}
Computing the singular locus of $X_1$, we get
\begin{align*}
L_1\cap W_{1,t}=V(x_1,v_t),\ L_1\cap W_{1,u}=V(x_2,v_u).
\end{align*}

For $i=2,3,4$, $W_i,X_i$ are very similar to $W_1,X_1$. The varieties $W_i$ are covered by two open affine varieties $W_{i,t}=\A^4_{x_1,x_4,u_t,t_i},W_{i,u}=\A^4_{x_2,x_4,t_u,u_i}$. The $X_i$ are closed subvarieties of $W_i$ defined by
\begin{align*}
X_i\cap W_{i,t}&=V(x_1^{9-2i}(u_t^9+x_4+x_1^3h_t)-t_i^2),\\
X_i\cap W_{i,u}&=V(x_2^{9-2i}(1+t_u^9x_4+x_2^3h_u)-u_i^2).
\end{align*}
The $\varphi_i\ (i=2,3,4)$ corresponds to homomorphisms
\begin{align*}
(\varphi_i|_{W_{i,t}})^\#:&K[x_1,x_4,u_t,t_{i-1}]\rightarrow K[x_1,x_4,u_t,t_i]\\
&[x_1,x_4,u_t,t_{i-1}]\mapsto [x_1,x_4,u_t,t_ix_1]\\
(\varphi_i|_{W_{i,u}})^\#:&K[x_2,x_4,t_u,u_{i-1}]\rightarrow K[x_2,x_4,t_u,u_i]\\
&[x_2,x_4,t_u,u_{i-1}]\mapsto [x_2,x_4,t_u,u_ix_2]
\end{align*}
where $t_1=v_t$ and $u_1=v_u$.
For $i=2,3$, 
\begin{align*}
L_i\cap W_{i,t}=V(x_1,t_i),\ L_i\cap W_{i,u}=V(x_2,u_i),
\end{align*}
and for $i=4$,
$$L_4\cap W_{4,t}=V(x_1,u_t^9+x_4,t_4),\ L_4\cap W_{4,u}=V(x_2,1+t_u^9x_4,u_4).$$
Since $X_4$ is a local complete intersection, in particular, satisfies the \(S_2\) condition, and regular in codimension one, we get $X_4$ is normal by Serre's criterion.

Now, we compute the relation between the canonical divisor $K_{\widetilde X}$ and pull-buck $\varphi^\ast K_X$ of canonical divisor of $X$. Let $E_i$ be the exceptional divisor of $\varphi:W_i\rightarrow W_{i-1}$ and we also denote the strict transform of $E_i$ by $E_i$. Since $X_i$ are closed subvarieties of codimension one in $W_i$, we regard $X_i$ as divisors on $W_i$. Since the morphisms $\varphi_i$ are blow-ups along $L_i$, we get
$$K_{W_i}=\varphi_i^\ast K_{W_{i-1}}+\left\{\begin{array}{cc} 2E_i&(i=1)\\ E_i&(i=2,3,4)\end{array}\right.$$
and
$$\varphi^\ast_i X_{i-1}= X_i+2E_i$$
for $i=1,2,3,4$ where $X_0=X$.
By direct computation,
$$\varphi^\ast_i E_{i-1}=E_i$$
for $i=3,4$. Note that no prime divisor except \(E_i\) appears in \(\varphi_i^\ast E_{i-1}\) because we have restricted ourselves from \(Bl_{L_{i-1}}(W_{i-1})\) to an open subset \(W_i\). Combining these, since $\varphi=\varphi_4\circ\cdots\circ\varphi_1$, we get
\begin{align*}
K_{W_4}&=\varphi^\ast K_{W_0}+2(\varphi^\prime)^\ast E_1+3E_4,\\
X_4&=\varphi^\ast X-2(\varphi^\prime)^\ast E_1-6E_4.
\end{align*}
where $\varphi^\prime=\varphi_4\circ\varphi_3\circ\varphi_2$. Therefore
$$K_{W_4}+X_4=\varphi^\ast(K_{W_0}+X)-3E_4$$
and, by adjunction formula,
$$K_{X_4}=\varphi^\ast K_X-3E_4|_{X_4}.$$
So $X$ is not log canonical.

\noindent Case $b\ne0$:

From Proposition \ref{invC2}, we put
\begin{align*}
X&=\A^3/\sigma(U(a,b))\\
&=\Spec K[x_1,x_2,x_3,x_4]/(\alpha b^2x_2^5-b^4x_3^3-b^{10}x_1^6x_4+\alpha b^2x_1x_2^3x_3+x_1^3F(x_1,x_2,x_3)).
\end{align*}
Then $X$ is a singular variety and its singular locus is $L_0:=V(x_1,x_2,x_3)$. We construct a birational morphism $\varphi\colon X_2\rightarrow X$ with a normal variety $X_2$ by two times blowing-up along the singular locus. We illustrate this construction by the following diagram.
$$\xymatrix@R=7pt@C=10pt{
Bl_{L_1}(W_1)\ar@{}[d]|\bigcup&Bl_{L_0}(W_0)\ar@{}[d]|\bigcup&\\
W_2\ar[r]^{\varphi_2}\ar@{}[d]|\bigcup&W_1\ar[r]^{\varphi_1}\ar@{}[d]|\bigcup&W_0\ar@{}[d]|\bigcup\\
X_2\ar[r]^{\varphi_2}\ar@{}[d]|\bigcup&X_1\ar[r]^{\varphi_1}\ar@{}[d]|\bigcup&X\ar@{}[d]|\bigcup\\
L_2&L_1&L_0
}$$
We use the same symbols for varieties and some morphisms as used in the previous case. But its explicit forms may be different. We will describe these varieties.
The variety $W_1$ and the morphism $\varphi_1:W_1\rightarrow W_0$ are same as in previous case. The variety $X_1$ is defined by
\begin{align*}
X_1\cap W_{1,t}&=V(\alpha^3b^2u_t^5x_1^2-b^4v_t^3-b^{10}x_1^3x_4+\alpha b^2u_t^3v_tx_1^2+x_1^4f_1(x_1,u_t,v_t)),\\
X_1\cap W_{1,u}&=V(\alpha^3b^2x_2^2-b^4v_u^3-b^{10}t_u^6x_2^3x_4+\alpha b^2t_uv_ux_2^2+t_u^3x_2^4f_2(x_2,t_u,v_u)),
\end{align*}
in $W_1$ where $f_1(s_1,s_2,s_3)=s_1^{-4}F(s_1,s_1s_2,s_1s_3),f_2(s_1,s_2,s_3)=s_1^{-4}F(s_1s_2,s_1,s_1s_3)$. About the singular locus $L_1$, we get
\begin{align*}
L_1\cap W_{1,t}=V(x_1,v_t),\ L_1\cap W_{1,u}=V(x_2,v_u).\\
\end{align*}
The open variety $W_2$ of $Bl_{L_1}(W_1)$ is covered by three open varieties $W_{2,y}=\A^4_{x_1,x_4,u_t,y},W_{2,z}=\A^4_{x_4,u_t,v_t,z},W_{2,w}=\A^4_{x_4,t_u,v_u,w}$. The morphism $\varphi_2$ is given by
\begin{align*}
(\varphi_2|_{W_{2,y}})^\#:&K[x_1,x_4,u_t,v_t]\rightarrow K[x_1,x_4,u_t,y]\\
&[x_1,x_4,u_t,v_t]\mapsto[x_1,x_4,u_t,yx_1],\\
(\varphi_2|_{W_{2,z}})^\#:&K[x_1,x_4,u_t,v_t]\rightarrow K[x_4,u_t,v_t,z]\\
&[x_1x_4,u_t,v_t]\mapsto[zv_t,x_4,u_t,v_t],\\
(\varphi_2|_{W_{2,w}})^\#:&K[x_2,x_4,t_u,v_u]\rightarrow K[x_4,t_u,v_u,w]\\
&[x_2,x_4,t_u,v_u]\mapsto[wv_u,x_4,t_u,v_u].
\end{align*}
The variety $X_2$ is defined by
\begin{align*}
X_2\cap W_{2,y}&=V(\alpha^3b^2u_t^5-b^4y^3x_1-b^{10}x_1x_4+\alpha b^2u_t^3yx_1+x_1^2f_1(x_1,u_t,yx_1)),\\
X_2\cap W_{2,z}&=V(\alpha^3b^2u_t^5z^2-b^4v_t-b^{10}z^3x_4v_t+\alpha b^2u_t^3z^2v_t+z^4v_t^2f_1(zv_t,u_t,v_t)),\\
X_2\cap W_{2,u}&=V(\alpha^3b^2w^2-b^4v_u-b^{10}t_u^6w^3x_4v_u+\alpha b^2t_uw^2v_u+t_u^3w^4v_u^2f_2(wv_u,t_u,v_u)).
\end{align*}
Now, we can see that the singular locus $L_2$ of $X_2$ is contained in $W_{2,y}$. It has the form
\begin{align*}
L_2=V(x_1,u_t,y^3+b^6x_4)
\end{align*}
in $W_{2,y}$. Since $X_2$ is a local complete intersection and regular in codimension one, $X_2$ is normal by Serre's criterion.

Next, we compute the exceptional divisor of $\varphi=(\varphi_2\circ\varphi_1)\colon X_2\rightarrow X$. Since the morphisms $\varphi_1,\varphi_2$ are blow-up, we get
\begin{align*}
K_{W_1}=\varphi_1^\ast K_{W_0}+2E_1,\ 
K_{W_2}=\varphi_2^\ast K_{W_1}+E_2.
\end{align*}
where $E_1,E_2$ are the exceptional divisors of $\varphi_1\colon W_1\rightarrow W_0,\varphi_2\colon W_2\rightarrow W_1$. Regarding $X,X_1,X_2$ as divisor, we get
\begin{align*}
\varphi_1^\ast X=X_1+3E_1,\ 
\varphi_2^\ast X_1=X_2+2E_2
\end{align*}
from multiplicity of $X,X_1$ along $L_0,L_1$ respectively. By direct computation,
\begin{align*}
\varphi_2^\ast E_1=E_1+E_2.
\end{align*}
We put \(\varphi=\varphi_2\circ\varphi_1\). Then, by adjunction formula,
\begin{align*}
K_{X_2}&=(K_{W_2}+X_2)|_{X_2}\\
&=\varphi_2^\ast(K_{W_1}+X_1)|_{X_2}-E_2|_{X_2}\\
&=\varphi_2^\ast(\varphi_1^\ast(K_{W_0}+X)-E_1)|_{X_2}-E_2|_{X_2}\\
&=\varphi^\ast K_X-E_1|_{X_2}-2E_2|_{X_2}.
\end{align*}
As the coefficient \((-2)\) appears, $X$ is not log canonical.
\end{proof}
%%%%%%%%%%%%%%%%%%%%%%%%%%%%%%%%%%%%%%%%%%%%%%%%%%
\section{Application}
In this section, we give other non log canonical quotient varieties using the main result. For this, we first prove some assertions.
%--------------lem:1----------------------------
\begin{lem}\label{lem:1}
Let $R\in\SL$ be a non pseudo-reflection element whose order is three. Then the centralizer $C_{\SL}(R)$ of $R$ is given by
\[
C_{\SL}(R)=\{aI+bR+cR^2|a,b,c\in K,\ a+b+c=1\}.
\]
In particular, $C_{\SL}(R)$ is abelian.
\end{lem}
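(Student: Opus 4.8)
The plan is to first pin down the conjugacy type of $R$, then compute its centralizer inside the full matrix algebra, and finally intersect with $\SL$. First I would observe that, since the characteristic is three, any order-three element of $\GL(3,K)$ is unipotent: its eigenvalues satisfy $x^3=1$, and $x^3-1=(x-1)^3$, so all eigenvalues equal $1$, whence $N:=R-I$ is nilpotent with $N^3=0$. The Jordan type of $N$ is governed by $\dim\ker N=\dim(K^3)^R$. The case $\dim\ker N=3$ gives $R=I$, which is excluded by $\mathrm{ord}(R)=3$; the case $\dim\ker N=2$ makes $(K^3)^R$ two-dimensional, i.e. $R$ a pseudo-reflection, which is excluded by hypothesis. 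Hence $\dim\ker N=1$, so $N$ is a single regular nilpotent Jordan block and $R$ is nonderogatory, with minimal polynomial equal to its characteristic polynomial $(x-1)^3$. In particular $I,R,R^2$ are linearly independent over $K$.

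The heart of the argument is to show that the centralizer of $R$ in the matrix algebra $M_3(K)$ is exactly $K[R]=\{aI+bR+cR^2\mid a,b,c\in K\}$. Because $N$ is a single Jordan block, there is a cyclic vector $v$ for which $v,Rv,R^2v$ form a basis of $K^3$. Given any $M$ commuting with $R$, I would write $Mv=av+bRv+cR^2v$ and set $M^\prime=aI+bR+cR^2$. Both $M$ and $M^\prime$ commute with $R$ and agree on $v$, hence agree on every $R^iv$ and therefore on all of $K^3$; thus $M=M^\prime\in K[R]$. The reverse inclusion is immediate, so $C_{M_3(K)}(R)=K[R]$.

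Finally I would intersect with $\SL$. Since $C_{\SL}(R)=C_{M_3(K)}(R)\cap\SL=K[R]\cap\SL$, it remains to decide which elements $aI+bR+cR^2$ have determinant one. Conjugating $R$ to upper-triangular Jordan form, every such element becomes upper triangular with all diagonal entries equal to $a+b+c$ (the diagonals of $I$, $R$, and $R^2$ are all $1$), so $\det(aI+bR+cR^2)=(a+b+c)^3$. Because the Frobenius $x\mapsto x^3$ is injective on $K$ and $1$ is its own cube root, the condition $(a+b+c)^3=1$ is equivalent to $a+b+c=1$, which yields the stated description. Commutativity of the polynomial algebra $K[R]$ then gives at once that $C_{\SL}(R)$ is abelian.

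The main obstacle is the middle step: showing that the matrix-algebra centralizer is no larger than $K[R]$. The cyclic-vector argument is what makes this work, and it relies crucially on $R$ being nonderogatory — which is precisely where the non-pseudo-reflection hypothesis enters, since a pseudo-reflection of order three would have a two-dimensional fixed space and a strictly larger centralizer.
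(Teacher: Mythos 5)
Your proof is correct, but it reaches the conclusion by a genuinely different route than the paper. The paper's proof is a concrete normal-form computation: it asserts that a non-pseudo-reflection element of order three is conjugate to the cyclic permutation matrix, then solves $AR=RA$ column by column (the three columns of $A$ are cyclically determined by one of them), which exhibits $A$ directly as the circulant matrix $aI+bR+cR^2$, and finally uses the characteristic-three circulant identity $\det A=a^3+b^3+c^3=(a+b+c)^3$. You instead prove the abstract statement that $R$ is regular unipotent, hence nonderogatory, and deduce via the cyclic-vector argument that the centralizer in the full matrix algebra equals the polynomial algebra $K[R]$; the determinant is then read off from the triangular Jordan form rather than from the circulant identity. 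Your approach buys two things: it generalizes (the same argument computes the centralizer of any nonderogatory matrix in any dimension), and it actually justifies the conjugacy-type claim that the paper leaves as an assertion --- your analysis of $\dim\ker(R-I)$ is exactly the verification that the non-pseudo-reflection and order-three hypotheses force $R$ to be a single Jordan block, which is what the paper's opening sentence silently uses. The paper's approach, in exchange, is shorter and fully explicit, with the answer visible as a circulant matrix. One small point of care in your write-up that is worth keeping: passing from $(a+b+c)^3=1$ to $a+b+c=1$ via injectivity of Frobenius is the same characteristic-three step the paper uses, and it is essential --- in characteristic prime to three this centralizer condition would instead read $(a+b+c)^3=1$ with three distinct cube roots.
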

\begin{proof}
Such an element $R$ is conjugate with the element
\[\matrix 010001100.\]
So we may replace $R$ by this matrix. When $A=[a_1,a_2,a_3]\in\SL$ where $a_1,a_2,a_3$ are vertical vectors,
\[AR=RA\ \Leftrightarrow\ Ra_1=a_3,\ Ra_2=a_1,\ Ra_3=a_2.\]
We put
\[a_3=\vvec cba,\]
then
\[A=\matrix abccabbca=aI+bR+cR^2.\]
Moreover, $\det A=a^3+b^3+c^3=(a+b+c)^3=1$ implies $a+b+c=1$. Hence we get
\[
C_{\SL}(R)\subset\{aI+bR+cR^2|a,b,c\in K,\ a+b+c=1\}.
\]
The other inclusion is obvious.
\end{proof}
%---------------------lem:2------------------
\begin{lem}\label{lem:2}
Let $G$ be a small finite group of $\SL$ whose order is $3^r$. Then $G\cong C^r_3$
\end{lem}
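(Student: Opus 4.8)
The plan is to prove that any small finite subgroup $G\subset\SL$ of order $3^r$ is isomorphic to $C_3^r$ by showing that $G$ is abelian of exponent three, and then invoking the classification of finite abelian groups. Since $\mathrm{char}\,K=3$, every nontrivial element of a $3$-group has order a power of three; to get exponent three we must rule out elements of order nine (or higher). First I would observe that any order-three element of $\SL$ lies in $\SL$ automatically (as noted in the preliminaries) and, being non-pseudo-reflection by smallness, its centralizer is the abelian group described in Lemma \ref{lem:1}.

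The key step is to prove that $G$ is abelian. I would pick any nontrivial $R\in G$; since $G$ is small, $R$ is not a pseudo-reflection, so Lemma \ref{lem:1} applies and $C_{\SL}(R)$ is abelian. The goal is then to show $G\subseteq C_{\SL}(R)$, i.e.\ that every element of $G$ commutes with $R$. Here I would use a standard fact about $p$-groups: a nontrivial finite $p$-group has nontrivial center. Applying this, I can choose $R$ to lie in the center $Z(G)$, so that $G\subseteq C_{\SL}(R)$ for this central $R$, and since $C_{\SL}(R)$ is abelian we conclude $G$ is abelian. (If $G$ is trivial there is nothing to prove, and if $|G|=3$ then $G\cong C_3$ directly.)

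Once $G$ is known to be abelian, I would show it has exponent three. Every element $g\in G$ of order three is, after diagonalization or by the rational canonical form, a non-pseudo-reflection unipotent element; but more to the point I must exclude elements of order nine. The cleanest route is to note that any element of a subgroup of $\SL$ in characteristic three has order dividing its multiplicative structure: the minimal polynomial of $g$ divides $x^{3^k}-1=(x-1)^{3^k}$, so $g$ is unipotent, and since $g$ is a $3\times3$ matrix its Jordan blocks have size at most three, forcing $(g-I)^3=0$ and hence $g^3=I$. Thus every element has order dividing three, so $G$ has exponent three. An abelian group of order $3^r$ and exponent three is a vector space over $\F_3$ of dimension $r$, whence $G\cong C_3^r$.

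The main obstacle I anticipate is the exponent-three argument rather than the abelianness. The unipotence bound $(g-I)^3=0$ for $3\times3$ matrices is the crucial ingredient: it is precisely the failure of this bound in higher dimension or the presence of larger Jordan blocks that would allow order-nine elements, so I would want to state carefully that every element of a $3$-subgroup of $\GL(3,K)$ in characteristic three is unipotent with all Jordan blocks of size at most three, giving $g^3=I$. With smallness ensuring Lemma \ref{lem:1} is available and the center argument supplying commutativity, the remaining steps are routine structure theory of finite abelian groups.
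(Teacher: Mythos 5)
Your proof is correct, and its core --- the nontrivial center of a $p$-group combined with Lemma \ref{lem:1} to obtain abelianness --- is exactly the paper's argument. Where you diverge is the exponent: the paper reads exponent three directly off Lemma \ref{lem:1}, since any element of $C_{\SL}(R)$ has the form $aI+bR+cR^2$ with $a+b+c=1$, whence $(aI+bR+cR^2)^3=(a^3+b^3+c^3)I=(a+b+c)^3I=I$ by the Frobenius in characteristic three; you instead prove unipotence of every element of a $3$-subgroup of $\GL(3,K)$ (the minimal polynomial divides $x^{3^k}-1=(x-1)^{3^k}$) and use that a nilpotent $3\times3$ matrix satisfies $(g-I)^3=0$, so $g^3=I$. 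Your route is more self-contained and more general --- it shows that any finite $3$-subgroup of $\GL(3,K)$ in characteristic three has exponent three, with no smallness hypothesis --- while the paper's is shorter once Lemma \ref{lem:1} is in hand. One point to fix in your write-up: as drafted, you apply Lemma \ref{lem:1} to an arbitrary nontrivial central element $R$, but that lemma requires $R$ to have order three, which you have not yet verified at that stage. Since your unipotence argument is independent of abelianness, simply run it first (or replace the chosen central element by a suitable power of order three); then every nontrivial element of $G$ has order exactly three and Lemma \ref{lem:1} applies legitimately. The paper sidesteps this by choosing $R\in Z(G)$ of order three from the outset.
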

\begin{proof}
Since the order of $G$ is a power of prime, the center $Z(G)$ of $G$ is not trivial. Take $R\in Z(G)$ whose order is three. Then $G\subset C_{\SL}(R)$. So Lemma \ref{lem:1} shows that $G$ is an abelian group and any element of $G$ has order three. Therefore, $G\cong C_3^r$ by the structure theorem of finitely generated abelian groups.
\end{proof}
%---------------------lem:3-----------------
\begin{lem}\label{lem:3}
Let $X^\prime$ be a variety and $\pi\colon X^\prime\rightarrow X$ be a finite dominant morphism. Then for any divisor $E^\prime$ over $X^\prime$, there exists the following diagram
\[\xymatrix{
Y^\prime\ar[r]^{f^\prime}\ar[d]_\rho&X^\prime\ar[d]^\pi\\
Y\ar[r]_f&X
}\]
where $f^\prime,f$ are birational morphisms with normal varieties $Y,Y^\prime$ and $\rho$ is a morphism satisfying the following conditions.
\begin{itemize}
\item The center of $E^\prime$ on $Y^\prime$ has codimension one.
\item The closure $\overline{\rho(\cent_{Y^\prime}(E^\prime))}$ is codimension one.
\end{itemize}
\end{lem}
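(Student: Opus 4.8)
The plan is to reduce the statement to the theory of valuations on the function field, using the correspondence recalled in Section~2 between equivalence classes of divisors over a variety and divisorial valuations on its function field. Since $\pi$ is finite and dominant, we have $\dim X^\prime=\dim X=:n$ and $K(X^\prime)/K(X)$ is a finite field extension. Let $v^\prime$ denote the divisorial valuation on $K(X^\prime)$ attached to the divisor $E^\prime$ over $X^\prime$, and let $v:=v^\prime|_{K(X)}$ be its restriction. The strategy is: (i) produce a normal model $f\colon Y\to X$ on which $v$ is realized by an honest prime divisor $E$; (ii) produce a normal model $f^\prime\colon Y^\prime\to X^\prime$ on which $E^\prime$ is an honest prime divisor and which admits a morphism $\rho\colon Y^\prime\to Y$ lying over $\pi$; (iii) check that $\rho$ carries the center of $E^\prime$ onto the center of $E$, which yields both codimension-one conditions.

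For (i), the key point, and the main obstacle, is to show that the restricted valuation $v$ is again divisorial. I would argue via Abhyankar's inequality. Because $K(X^\prime)/K(X)$ is finite, the residue field $k_{v^\prime}$ is finite over $k_v$ and the value group $\Gamma_v\subseteq\Gamma_{v^\prime}$ has finite index; in particular $\mathrm{tr.deg}_K\,k_v=\mathrm{tr.deg}_K\,k_{v^\prime}$ and $\Gamma_v\cong\Z$. Since $v^\prime$ is divisorial, $\mathrm{tr.deg}_K\,k_{v^\prime}=n-1$, hence $\mathrm{tr.deg}_K\,k_v=n-1$, so $v$ realizes equality in $\mathrm{rat.rank}\,v+\mathrm{tr.deg}_K\,k_v\le n$. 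An Abhyankar valuation of rational rank one with finitely generated residue field (here $k_v$ is finitely generated over $K$, being an intermediate field of the finitely generated extension $k_{v^\prime}/K$) is divisorial; thus there is a normal variety $Y$ with a proper birational morphism $f\colon Y\to X$ on which $v$ has codimension-one center, a prime divisor $E$.

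For (ii), choose, by the very definition of a divisor over $X^\prime$, a normal, proper birational model $V^\prime\to X^\prime$ on which $E^\prime$ is a prime divisor. The birational morphism $f\colon Y\to X$ together with $V^\prime\to X^\prime\to X$ determines a rational map $V^\prime\dashrightarrow Y$; resolving its indeterminacy by taking the closure of its graph in $V^\prime\times_K Y$ and normalizing yields a normal variety $Y^\prime$ with a proper birational morphism $Y^\prime\to V^\prime$ (hence $f^\prime\colon Y^\prime\to X^\prime$) and a morphism $\rho\colon Y^\prime\to Y$. Here no resolution of singularities is needed, so the construction is valid in characteristic three. That the resulting square commutes follows from separatedness of $X$: both $\pi\circ f^\prime$ and $f\circ\rho$ agree with the structural map $Y^\prime\to X$ on a dense open set, hence everywhere.

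Finally, for (iii) I would invoke compatibility of centers of valuations with morphisms. Since $Y^\prime\to V^\prime$ is birational and $E^\prime$ realizes $v^\prime$ on $V^\prime$, the strict transform $\widetilde{E^\prime}$ is a prime divisor on $Y^\prime$ with $v^\prime=\mathrm{ord}_{\widetilde{E^\prime}}$, so $\cent_{Y^\prime}(E^\prime)=\widetilde{E^\prime}$ has codimension one, giving the first condition. For the second, let $\eta^\prime$ be the generic point of $\widetilde{E^\prime}$, the center of $v^\prime$ on $Y^\prime$. A direct computation with local rings shows that for the dominant morphism $\rho$ the image $\rho(\eta^\prime)$ is precisely the center of the restricted valuation $v$ on $Y$, namely the generic point of $E$; indeed $\O_{Y,\rho(\eta^\prime)}\subseteq\O_{Y^\prime,\eta^\prime}\subseteq\O_{v^\prime}$ forces $\O_{Y,\rho(\eta^\prime)}\subseteq\O_v$ with $\mathfrak m_v\cap\O_{Y,\rho(\eta^\prime)}=\mathfrak m_{\rho(\eta^\prime)}$. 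Taking closures gives $\overline{\rho(\cent_{Y^\prime}(E^\prime))}=E$, of codimension one. This completes the verification; the only genuinely nonformal input is the divisoriality of $v$ established in step~(i).
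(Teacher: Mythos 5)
Your proof is correct, but it takes a genuinely different route from the paper's. The paper argues by Galois descent: it forms the Galois closure $X^{\prime\prime}\rightarrow X^\prime\rightarrow X$ of $\pi$ with group $G$, base-changes a model $Y_0^\prime\rightarrow X^\prime$ carrying $E^\prime$ to $X^{\prime\prime}$, replaces it by the fibre product of its $G$-translates so that the resulting $Y^{\prime\prime}\rightarrow X^{\prime\prime}$ is $G$-equivariant, and then sets $Y^\prime=Y^{\prime\prime}/H$ and $Y=Y^{\prime\prime}/G$ (with $H$ the subgroup fixing $K(X^\prime)$); the divisor $E^\prime$ is represented on $Y^\prime$ by the push-forward of a component of its pull-back to $Y^{\prime\prime}$, and both codimension-one statements are immediate because the $\rho$ so constructed is \emph{finite}. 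You instead restrict the valuation: the fundamental inequality for the finite extension $K(X^\prime)/K(X)$ bounds the ramification index and residue degree, so $v=v^\prime|_{K(X)}$ is discrete of rank one with residue field of transcendence degree $n-1$, hence divisorial by Zariski--Abhyankar, and a graph closure then supplies $Y^\prime$ and $\rho$. Your route is shorter, stays inside the valuation-theoretic framework set up in Section 2, and identifies $E$ intrinsically as the divisor computing $v^\prime|_{K(X)}$; its costs are that it invokes Abhyankar-type results which the paper's construction avoids, and that your $\rho$ is only proper and generically finite rather than finite --- the lemma does not ask for finiteness, and the later use in Theorem \ref{thm:1} only needs the ramification index of the extension of discrete valuation rings given by the local rings of $Y$ at $E$ and of $Y^\prime$ at $E^\prime$, which your construction also provides, so nothing breaks downstream. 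One point you should make explicit in step (i): to realize $v$ by a prime divisor on a model mapping birationally \emph{to $X$} (not merely on some variety with function field $K(X)$), you need $v$ to have a nonempty center on $X$, since $X$ is not assumed proper; this does hold, because $E^\prime$, being a divisor over $X^\prime$, has a center on $X^\prime$, and the image of that center under $\pi$ is a point whose local ring is dominated by $\O_v$ --- exactly the local-ring computation you carry out in step (iii) for $\rho$, applied instead to $\pi$.
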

\begin{proof}
Take a birational morphism $f_0^\prime\colon Y^\prime_0\rightarrow X^\prime$ such that $Y^\prime_0$ is normal variety and $E^\prime$ is a divisor on $Y^\prime_0$.
Let $\phi\colon X^{\prime\prime}\rightarrow X^\prime$ be the Galois closure of $\pi\colon X^\prime\rightarrow X$. Namely, the coordinate ring of $X^{\prime\prime}$ is the integral closure of $K[X]$ in a Galois closure of $K(X^\prime)/K(X)$. The morphism $\phi$ is defined by the inclusion $K[X^\prime]\rightarrow K[X^{\prime\prime}]$. We denote the Galois group of $L/K(X)$ by $G=\{g_1,\ldots,g_d\}$. It acts on $X^{\prime\prime}$ canonically.

We define a variety $Y^\dprime_0$ as the component of $Y^\prime_0\times_{X^\prime}X^{\prime\prime}$ such that the morphism $\varphi\colon Y^\dprime_0\rightarrow X^{\prime\prime}$ is dominant.

Moreover, we define a variety $Y^\dprime$ by
\[
Y^\dprime=((\cdots((Y^\dprime_{0,g_1}\times_{\sigma(g_1),X^\dprime,\sigma(g_2)}Y^\dprime_{0,g_2})\times_{X^\dprime,\sigma(g_3)}Y^\dprime_{0,g_3})\cdots)\times_{X^\dprime,\sigma(g_d)}Y^\dprime_{0,g_d})
\]
where $Y^\dprime_{0,g}$ is copy of $Y^\dprime_0$ with the morphism $\sigma(g)\colon Y^\dprime_0\xrightarrow{\varphi} X^\dprime\xrightarrow gX^\dprime$ for any $g\in G$. For $h\in G$, we define the action of $h$ on $Y^\dprime$ by the morphism induced by the morphisms $Y^\dprime_{0,g}\xrightarrow{id}Y^\dprime_{hg}$. Then $G$ acts on $Y^\dprime$. Furthermore, from the diagram
\[\xymatrix{
Y^\dprime_{0,g}\ar[r]^\varphi\ar[d]_{id}&X^\dprime\ar[r]^g\ar[d]_{id}&X^\dprime\ar[d]^h\\
Y^\dprime_{0,hg}\ar[r]_\varphi&X^\dprime\ar[r]_{hg}&X^\dprime
}\]
we get the morphism $Y^\dprime\rightarrow X^\dprime$ is $G$-equivariant. 
Let $H$ be a subgroup of $G$ which corresponds to the field extension $L/K(X^\prime)$. Then there exists natural morphisms $f^\prime\colon Y^\dprime/H\rightarrow X^\dprime/H=X^\prime,\ f\colon Y^\dprime/G\rightarrow X^\dprime/G=X$ since $Y^\dprime\rightarrow X^\dprime$ is $G$-equivariant. These are birational morphisms. The morphism $\rho\colon Y^\prime\rightarrow Y$ is defined naturally.
\[\xymatrix{
Y^\dprime\ar[d]_q\ar[r]&Y^\dprime_0\ar[d]\ar[r]&X^\dprime\ar[d]^\phi\\
Y^\prime\ar[d]_\rho\ar@/_8pt/[rr]_{f^\prime}&Y^\prime_0\ar[r]^{f^\prime_0}&X^\prime\ar[d]^\pi\\
Y\ar[rr]_f&&X
}\]
We consider a prime divisor $E^\dprime$ on $Y^\dprime$ contained in the pull-back of $E^\prime$ by $Y^\dprime\rightarrow Y^\dprime_0\rightarrow Y^\prime_0$. The push-forward $q_\ast E^\dprime$ of $E^\dprime$ by the natural morphism $q\colon Y^\dprime\rightarrow Y^\prime$ is a prime divisor on $Y^\prime$ since $q$ is finite. Now, let $v^\dprime,v^\prime$ be the valuations on $K(X^\dprime)=L,K(X^\prime)$ corresponding to $E,E^\prime$ respectively. By construction, the valuation $v^\dprime$ is an extension of $v^\prime$ on $K(X^\dprime)$. Since the push-forward of a prime divisor corresponds to restricting valuation, $q_\ast E^\dprime$ is equivalent to $E^\prime$ as a divisor over $X^\prime$. Moreover, since $\rho$ is also finite, $\rho$ preserve dimension.
\end{proof}
%-------------------thm:1---------------
\begin{thm}\label{thm:1}
Let $X^\prime, X$ be a normal $\Q$-Gorenstein variety and $\pi\colon X^\prime\rightarrow X$ be a finite dominant morphism which is \'etale in codimension one. If $X^\prime$ is not log canonical (resp. not log terminal) then $X$ is not log canonical (resp. not log terminal).
\end{thm}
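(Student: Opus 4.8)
The plan is to exploit the key relation that ties discrepancies over $X'$ to discrepancies over $X$ when $\pi$ is étale in codimension one. The essential point is the ramification formula: since $\pi\colon X'\to X$ is finite, dominant, and étale in codimension one, we have $K_{X'}=\pi^\ast K_X$ (there is no ramification divisor, so the Hurwitz-type correction term vanishes). This is the geometric translation of the étale-in-codimension-one hypothesis, and it is what makes the numerical comparison of discrepancies clean.

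First I would take the hypothesis that $X'$ is not log canonical (resp. not log terminal) and extract from it a divisor $E'$ over $X'$ whose discrepancy $a_{E'}$ satisfies $a_{E'}<-1$ (resp. $a_{E'}\le -1$). Then I would feed $E'$ into Lemma \ref{lem:3} to obtain the commutative square
\[
\xymatrix{
Y'\ar[r]^{f'}\ar[d]_\rho&X'\ar[d]^\pi\\
Y\ar[r]_f&X
}
\]
with $f',f$ birational and $\rho$ finite, arranged so that $E'$ has codimension-one center on $Y'$ and $\rho(\cent_{Y'}(E'))$ is again codimension one, hence is a prime divisor $E$ over $X$. The goal is to compare the discrepancy $a_E$ of $E$ over $X$ with $a_{E'}$ and show $a_E<-1$ (resp. $\le-1$), which forces $X$ to fail log canonicity (resp. log terminality).

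The computation I would carry out is the following. Over an open locus where everything is well-behaved, write $K_{Y'}=f'^\ast K_{X'}+\sum a_{D'}D'$ and $K_Y=f^\ast K_X+\sum a_D D$, and pull back the first via $\rho$. The key inputs are $K_{X'}=\pi^\ast K_X$ (from étale in codimension one) together with the compatibility $\pi\circ f'=f\circ\rho$ from the square; combining these with the ramification formula for the vertical map $\rho\colon Y'\to Y$, one gets a relation of the shape $K_{Y'}=\rho^\ast K_Y+R_\rho$, where the ramification divisor $R_\rho$ is effective. Matching coefficients along $E'$ and $E=\rho_\ast E'$, and using that the ramification index $e$ of $E'$ over $E$ satisfies $e\ge 1$, I expect the identity
\[
a_{E'}+1=e\,(a_E+1),
\]
which is the standard behaviour of log discrepancies under finite covers. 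Since $e\ge 1$ and the left side is negative (resp. nonpositive), the right side forces $a_E+1<0$ (resp. $\le 0$), i.e. $a_E<-1$ (resp. $a_E\le-1$), proving $X$ is not log canonical (resp. not log terminal).

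\textbf{The main obstacle} I anticipate is justifying the log-discrepancy transformation rule $a_{E'}+1=e(a_E+1)$ rigorously in this setting, rather than over $\C$: one must verify that the only correction to $K_{Y'}=\rho^\ast K_Y$ is the ramification along $E'$ itself (with index $e$), and that the étale-in-codimension-one hypothesis on $\pi$ propagates to control $\rho$ along the relevant divisor so that no unexpected ramification inflates or deflates the coefficient. Concretely, the delicate point is tracking that the center-codimension conditions guaranteed by Lemma \ref{lem:3} are exactly what let us pass from an abstract divisor $E'$ over $X'$ to an honest divisorial valuation on $K(X)$ whose discrepancy we can compute, and then checking that the ramification index $e$ enters with the correct sign so that a log discrepancy below zero upstairs descends to one below zero downstairs. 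Everything else is bookkeeping with the ramification formula, but this coefficient-comparison is where the argument genuinely lives.
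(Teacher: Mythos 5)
Your proposal follows the same route as the paper's own proof: extract a divisor $E^\prime$ over $X^\prime$ with discrepancy below $-1$ (resp.\ at most $-1$), feed it into Lemma \ref{lem:3} to get the commutative square, and compare discrepancies using $K_{X^\prime}=\pi^\ast K_X$ together with the ramification of $\rho$. The one step that would fail as you stated it is the ``identity'' you propose to verify, $a_{E^\prime}+1=e\,(a_E+1)$: this is the tame (equivalently, characteristic-zero) transformation rule for log discrepancies, and it is false in general in positive characteristic. If $\rho$ is wildly ramified along $E^\prime$ --- which must be allowed here, since the whole point of the paper is the wild setting --- then the coefficient $b$ of $E^\prime$ in $K_{Y^\prime}-\rho^\ast K_Y$ can strictly exceed $e-1$, so no amount of care will let you prove the equality. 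What is true, and what the paper invokes from \cite[2.41]{Kol:1}, is only the one-sided inequality $b\geq e-1$, i.e.\ $a_{E^\prime}+1\geq e\,(a_E+1)$, with equality exactly in the tame case.

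Fortunately this inaccuracy does not sink your argument, because your final deduction uses only one direction of the claimed identity. Writing $a_{E^\prime}=e\,a_E+b$ and using $b\geq e-1$, one gets $e\,(a_E+1)\leq a_{E^\prime}+1<0$ (resp.\ $\leq 0$), hence $a_E<-1$ (resp.\ $a_E\leq-1$): wild ramification only pushes $a_{E^\prime}$ upward, so failure of log canonicity upstairs descends all the more readily. Replace your ``identity'' by this inequality, cite \cite[2.41]{Kol:1}, and your proof coincides with the paper's. As a sanity check that the exact identity cannot hold in the wild case: if it did, the same computation would transfer discrepancy bounds in the opposite direction as well, and Theorem \ref{thm:2} would not need its hypothesis that the degree of $\pi$ is prime to $p$.
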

\begin{proof}
We prove only the assertion about log canonicity. The other assertion is similarly proved.

For any prime divisor $E^\prime$ over $X^\prime$ with discrepancy smaller than $-1$, we take a diagram in Lemma \ref{lem:3}.
\[\xymatrix{
Y^\prime\ar[r]^{f^\prime}\ar[d]_\rho&X^\prime\ar[d]^\pi\\
Y\ar[r]_f&X
}\]
We denote $\cent_{Y^\prime}(E^\prime)$ again by $E^\prime$ and $E=\overline{\rho(E^\prime)}$, which are prime divisors. We write
\begin{align*}
K_Y&=f^\ast K_X+aE+F,\\
K_{Y^\prime}&=\rho^\ast K_Y+bE^\prime+G^\prime\\
\rho^\ast E&=tE^\prime+H^\prime,
\end{align*}
where $F$ doesn't contain $E$ and $G^\prime,H^\prime$ don't contain $E^\prime$. Because $\pi$ is \'etale in codimension one, $K_{X^\prime}=\pi^\ast K_X$. Hence we get
\begin{align*}
K_{Y^\prime}&=\rho^\ast f^\ast K_X+(at+b)E^\prime+\rho^\ast F+G^\prime+aH^\prime\\
&=(f^\prime)^\ast\pi^\ast K_X+(at+b)E^\prime+\rho^\ast F+G^\prime+aH^\prime\\
&=(f^\prime)^\ast K_{X^\prime}+(at+b)E^\prime+\rho^\ast F+G^\prime+aH^\prime.
\end{align*}
By assumption, $at+b<-1$, and so $a<-\frac{b+1}t$. Since \(t\) is the ramification index of \(\rho\) along \(E^\prime\), by \cite[2.41]{Kol:1}, $b\geq t-1$. Therefore, we get $a<-1$.
\end{proof}

%-------------thm:2---------------------------
\begin{thm}\label{thm:2}
Let \(k\) be a field of characteristic \(p>0\) and \(X,X^\prime\) be normal \(\Q\)-Gorenstein varieties over \(k\). Suppose that \(\pi\colon X^\prime\rightarrow X\) is a finite dominant morphism of degree \(n\) which is \'etale in codimension one. If \(n\) is not divided by \(p\) and \(X^\prime\) is canonical, then \(X\) is log terminal.
\end{thm}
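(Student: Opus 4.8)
The plan is to show that every divisor $E$ over $X$ has discrepancy $a_E>-1$. Since a non-exceptional divisor has discrepancy $0$, I may assume $E$ is exceptional, so its center on $X$ has codimension at least two. First I would realize $E$ on a normal variety $f\colon Y\to X$ and form the normalization $Y'$ of $Y\times_X X'$, producing a finite morphism $\rho\colon Y'\to Y$ together with a birational morphism $f'\colon Y'\to X'$, exactly the square appearing in Lemma \ref{lem:3}. Because $\pi$ is étale in codimension one it is étale at the generic point, so $K(X')/K(X)$ is separable of degree $n$ and $K_{X'}=\pi^\ast K_X$.

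Next, repeating the discrepancy computation from the proof of Theorem \ref{thm:1} for a divisor $E'$ on $Y'$ lying over $E$, I obtain $a_{E'}=t\,a_E+b$, where $t$ is the ramification index of $\rho$ along $E'$ and $b$ is the discrepancy of $E'$ over $Y$. In general $b\geq t-1$ by \cite[2.41]{Kol:1}, with equality precisely when $\rho$ is tamely ramified along $E'$. Thus, if I can arrange $E'$ to be a tame divisor over $E$, I get the clean relation $a_{E'}+1=t(a_E+1)$.

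The key step, and the main obstacle, is to produce such a tame $E'$; this is where the hypothesis $p\nmid n$ is used. Let $M$ be a Galois closure of $K(X')/K(X)$ with group $\tilde G$, and let $H=\mathrm{Gal}(M/K(X'))$, so that $[\tilde G:H]=n$ is prime to $p$. Hence $H$ contains a $p$-Sylow subgroup of $\tilde G$. Fixing an extension of the divisorial valuation $v_E$ to $M$, its wild inertia $P$ is a $p$-group, so $P$ is conjugate into $H$ (any $p$-subgroup lies in a Sylow subgroup, and some Sylow subgroup is contained in $H$). The corresponding extension $w$ of $v_E$ to $K(X')$ then has ramification index prime to $p$. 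Since $Y'$ is normal and finite over $Y$, this $w$ is the valuation of a divisor $E'$ on $Y'$ lying over $E$, along which $\rho$ is tamely ramified. As $\pi$ is finite, the center of $E'$ on $X'$ maps finitely onto that of $E$, hence also has codimension at least two, so $E'$ is exceptional over $X'$.

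Finally I would conclude as follows: for this tame $E'$ we have $a_{E'}+1=t(a_E+1)$ with $t\geq 1$, and the hypothesis that $X'$ is canonical gives $a_{E'}\geq 0$, hence $a_{E'}+1\geq 1$. Therefore $a_E+1=(a_{E'}+1)/t\geq 1/t>0$, i.e. $a_E>-1$; since $E$ was arbitrary, $X$ is log terminal. I expect all the difficulty to be concentrated in the tameness step: without $p\nmid n$ the different exponent $b$ can strictly exceed $t-1$, the wild contribution then has the wrong sign for this direction of the inequality, and the argument breaks down.
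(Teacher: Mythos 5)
Your proposal is correct, and its skeleton --- the fiber-product square from Lemma \ref{lem:3}, the discrepancy computation $a_{E^\prime}=t\,a_E+b$, the appeal to \cite[2.41]{Kol:1}, and the closing inequality $a_E+1=(a_{E^\prime}+1)/t>0$ --- coincides with the paper's proof of Theorem \ref{thm:2}. Where you genuinely diverge is the key step of producing a tame divisor $E^\prime$ over $E$, which is precisely where $p\nmid n$ enters. The paper does this by elementary counting: localizing at the generic point of $E$, the integral closure of the DVR $\O_{Y,E}$ in $K(X^\prime)$ is finite, so the fundamental identity $\sum_i r_ie_i=n$ holds (with $r_i$ the ramification indices and $e_i$ the residue degrees of the divisors over $E$); since $p\nmid n$, some $r_i$ --- indeed some product $r_ie_i$ --- is prime to $p$, and that divisor is the tame $E^\prime$. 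You instead pass to a Galois closure $M$ of $K(X^\prime)/K(X)$, note that $H=\mathrm{Gal}(M/K(X^\prime))$ contains a $p$-Sylow subgroup of $\tilde G$ because $[\tilde G:H]=n$ is prime to $p$, and conjugate the wild inertia $P$ (a $p$-group) into $H$; the restricted valuation $w$ then lies in the fixed field direction of the maximal tamely ramified subextension $M^P$, and the finiteness of $Y^\prime\to Y$ realizes $w$ as a divisor on $Y^\prime$. Both routes are valid. The paper's counting argument is shorter and avoids Galois closures and ramification groups altogether. Your argument is heavier machinery but more conceptual: it exhibits exactly how $p\nmid n$ neutralizes wild inertia, and it delivers full tameness (prime-to-$p$ ramification index \emph{and} separable residue field extension) automatically, since subextensions of tame extensions are tame. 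This last point is worth emphasizing: in positive characteristic, ``ramification index prime to $p$'' alone is not the definition of tame, and the equality $b=t-1$ in \cite[2.41]{Kol:1} needs residue separability as well; your construction supplies it for free, while in the paper's argument one should observe that the counting actually gives $p\nmid r_ie_i$, so that $p\nmid e_i$ forces the residue extension to be separable --- a detail the paper's text does not spell out.
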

\begin{proof}For a birational map $f\colon Y\rightarrow X$ with a normal variety $Y$, we denote the normalization of the component of $X^\prime\times_{X}Y$ dominating $X^\prime$ by $Y^\prime$. We denote the composition of the normalization map and projections by $\rho\colon Y^\prime\rightarrow Y,\ g\colon Y^\prime\rightarrow X^\prime$. So we consider the following diagram.
\[\xymatrix{
Y^\prime\ar[r]^g\ar[d]_\rho& X^\prime\ar[d]^\pi\\
Y\ar[r]_f&X
}\]
Take an $f$-exceptional divisor $E$ and write $\rho^\ast E=\sum r_iE^\prime_i$ where $E^\prime_i$ are prime divisors. Then the equation
\[\sum_i r_ie_i=n\]
is held for some integer $e_i$. Since the right side of this equation is not divisible by \(p\), the one of $r_i$ is not divisible by p. So we denote a prime divisor with such coefficient by $E^\prime$ and its coefficient by $r$. We write
\begin{align*}
K_Y&=f^\ast K_X+aE+F,\\
K_{Y^\prime}&=\rho^\ast K_Y+bE^\prime+G^\prime\\
\rho^\ast E&=rE^\prime+H^\prime,
\end{align*}
where $F$ doesn't contain $E$ and $G^\prime,H^\prime$ don't contain $E^\prime$. Because $\pi$ is \'etale in codimension one, $K_{X^\prime}=\pi^\ast K_X$. Hence we get
\begin{align*}
K_{Y^\prime}&=\rho^\ast f^\ast K_X+(ar+b)E^\prime+\rho^\ast F+G^\prime+aH^\prime\\
&=g^\ast\pi^\ast K_X+(ar+b)E^\prime+\rho^\ast F+G^\prime+aH^\prime\\
&=g^\ast K_{X^\prime}+(ar+b)E^\prime+\rho^\ast F+G^\prime+aH^\prime.
\end{align*}
Since $X^\prime$ is canonical, $ar+b\geq 0$. Since \(r\) is the ramification index of \(\rho\) and \(\rho\) is tame, by \cite[2.41]{Kol:1}, $b=r-1$. Therefore, we get
\[a\geq -1+\frac 1r>-1.\]
Hence $X$ is log terminal.
\end{proof}

Now we give other non log canonical quotient varieties.
%--------------cor:1-----------------------------
\begin{coro}\label{cor:1}
Let $G$ be a wild small finite subgroup of $\GL(3,K)$ and let $X$ be the quotient variety $\A^3/G$. We write $\# G=3^rn$ where $r,n\in\Z_{>0}$ and $n$ is not divided by three.
\begin{itemize}
\item[(i)] If $r=1$ then $X$ is log terminal.
\item[(ii)] If $r\geq 2$ then $X$ is not log canonical.
\end{itemize}\end{coro}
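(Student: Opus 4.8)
The plan is to reduce both statements to tools already in hand---the Main Theorem (Theorem \ref{main}), Theorem \ref{thm:1}, Theorem \ref{thm:2}, and Yasuda's crepant resolution result \cite{Yas:1}---by passing to a Sylow $3$-subgroup of $G$ and exploiting that quotient maps by small groups are quasi-\'etale. First I would fix a Sylow $3$-subgroup $P\subseteq G$, so that $\#P=3^r$. Every element of $P$ has order a power of $3$, hence determinant a $3$-power root of unity in $K^\ast$; since $\mathrm{char}\,K=3$ the only such root is $1$, so $P\subseteq\SL$. As a subgroup of a small group, $P$ is itself small, and Lemma \ref{lem:2} then gives $P\cong C_3^r$.

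The technical heart of the argument is the claim that for any subgroup $N\leq G$ the natural finite dominant morphism $\pi_N\colon\A^3/N\rightarrow\A^3/G$, induced by $K[\A^3]^G\subseteq K[\A^3]^N$, is \'etale in codimension one. I would prove this in the tower $\A^3\rightarrow\A^3/N\rightarrow\A^3/G$. Because $N$ and $G$ are small, every nontrivial element fixes only a locus of codimension at least two, so both actions are free away from a closed set of codimension $\geq 2$; hence the two quotient maps $\A^3\rightarrow\A^3/N$ and $\A^3\rightarrow\A^3/G$ are \'etale in codimension one. At a codimension-one point, multiplicativity of ramification indices along the tower forces the ramification index of $\pi_N$ to be one; the residue field extensions are separable since $K(\A^3)/K(\A^3)^G$ is Galois, so every subextension is separable; and flatness is automatic over the codimension-one discrete valuation rings. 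Thus $\pi_N$ is finite, flat and unramified in codimension one, which is exactly what is needed. I would also record here that the quotient varieties in play are normal and $\Q$-Gorenstein (the groups governing the source quotients lie in $\SL$), so that Theorems \ref{thm:1} and \ref{thm:2} apply.

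With these preliminaries the two cases are short. For (i), $r=1$ gives $P\cong C_3$ realized as a small subgroup of $\SL$; by \cite{Yas:1} the quotient $\A^3/P$ admits a crepant resolution and is therefore canonical. The morphism $\pi_P\colon\A^3/P\rightarrow\A^3/G$ has degree $n=[G:P]$, which is prime to $3$, and is \'etale in codimension one, so Theorem \ref{thm:2} yields that $\A^3/G$ is log terminal. For (ii), $r\geq 2$ lets me choose a subgroup $H\cong C_3^2$ inside $P\cong C_3^r$; then $H\subseteq\SL$ is small, so the Main Theorem shows that $\A^3/H$ is not log canonical. Applying Theorem \ref{thm:1} to the quasi-\'etale morphism $\pi_H\colon\A^3/H\rightarrow\A^3/G$ then shows that $\A^3/G$ is not log canonical.

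The step I expect to be the main obstacle is the quasi-\'etaleness of $\pi_N$: upgrading ``ramification index one'' to genuine \'etaleness in codimension one requires controlling both separability of the residue extensions and flatness, and it is precisely here that the smallness of the groups and the Galois property of $K(\A^3)/K(\A^3)^G$ are indispensable. A closely related point to settle carefully is that $\A^3/G$ is $\Q$-Gorenstein for an arbitrary small $G\subseteq\GL(3,K)$, not merely for $G\subseteq\SL$, since this is a standing hypothesis of both Theorem \ref{thm:1} and Theorem \ref{thm:2}.
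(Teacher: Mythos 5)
Your proposal is correct and follows essentially the same route as the paper's own proof: produce a subgroup isomorphic to $C_3$ (for $r=1$) or $C_3^2$ (for $r\geq 2$) via Sylow's theorem and Lemma \ref{lem:2}, then apply \cite{Yas:1} together with Theorem \ref{thm:2} for (i), and Theorem \ref{main} together with Theorem \ref{thm:1} for (ii). The additional verifications you supply --- that a Sylow $3$-subgroup lands in $\SL$ (needed to invoke Lemma \ref{lem:2}), that the induced map $\A^3/N\rightarrow\A^3/G$ is \'etale in codimension one, and the $\Q$-Gorenstein hypothesis --- are points the paper asserts or leaves implicit, and your arguments for them are sound.
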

\begin{proof}
Firstly, we consider the case of $r=1$. By Lemma \ref{lem:2} and Sylow's theorem, $G$ has a subgroup $H$ isomorphic to $C_3$. Let $X^\prime$ be the quotient variety $\A^3/H$ and $\pi\colon X^\prime\rightarrow X$ be the canonical morphism, which is \' etale in codimension one. Note that the variety $X^\prime$ is canonical from \cite[Corollary 6.25]{Yas:1}. Since Theorem \ref{thm:2}, we get \(X\) is log terminal.

When $r\geq 2$, by Lemma \ref{lem:2} and Sylow's theorem, $G$ has a subgroup $H$ isomorphic to $C_3^2$. Then the quotient variety $\A^3/H$ is not log canonical and the natural morphism $\A^3/H\rightarrow\A^3/G$ is a finite dominant morphism which is \'etale in codimension one. By Theorem \ref{thm:1}, $\A^3/G$ is not log canonical.
\end{proof} 
%%%%%%%%%%%%%%%%%%%%%%%%%%%%%%%%%%%%%%%%%%%%%%%%%%

\end{document}